\newcommand{\rhogr}{\rho_{\rm gr}}
\newcommand{\taugr}{\tau_{\rm gr}}
\newcommand{\vertex}{\node[vertex]}
\tikzstyle{vertex}=[circle, draw, inner sep=0pt, minimum size=6pt]
\newtheorem{theorem}{Theorem}
\newtheorem{proposition}{Proposition}%
\newtheorem{cl}{Claim}
\newtheorem{lemma}{Lemma}
\newtheorem{Ob}{Observation}
\begin{document}

\title[Article Title]{Computation of Grundy dominating sequences in (co-)bipartite graphs}


\author[1,2]{\fnm{Bo\v{s}tjan} \sur{Bre\v{s}ar}}\email{bostjan.bresar@um.si}

\author[3]{\fnm{Arti} \sur{ Pandey}}\email{arti@iitrpr.ac.in}

\author*[3]{\fnm{Gopika} \sur{Sharma}}\email{2017maz0007@iitrpr.ac.in}

\affil[1]{\orgdiv{Faculty of Natural Sciences and Mathematics}, \orgname{University of Maribor}, \country{Slovenia}}

\affil[2]{\orgdiv{Institute of Mathematics}, \orgname{Physics and Mechanics}, \orgaddress{Ljubljana}, \country{Slovenia}}

\affil[3]{\orgdiv{Department of Mathematics}, \orgname{Indian Institute of Technology Ropar}, \orgaddress{Rupnagar}, \country{India}}


\abstract{A sequence $S$ of vertices of a graph $G$ is called a dominating sequence of $G$ if $(i)$ each vertex $v$ of $S$ dominates a vertex of $G$ that was not dominated by any of the vertices preceding vertex $v$ in $S$, and $(ii)$ every vertex of $G$ is dominated by at least one vertex of $S$. The \textsc{Grundy Domination} problem is to find a longest dominating sequence for a given graph $G$. It has been known that the decision version of the \textsc{Grundy Domination} problem is NP-complete even when restricted to chordal graphs. In this paper, we prove that the decision version of the \textsc{Grundy Domination} problem is NP-complete for bipartite graphs and co-bipartite graphs. On the positive side, we present a linear-time algorithm that solves the \textsc{Grundy Domination} problem for chain graphs, which form a subclass of bipartite graphs.
}

\keywords{dominating sequence, bipartite graph, chain graphs, computational complexity, linear-time algorithm}


\pacs[Mathematics Subject Classification]{05C69, 05C65, 05C85}

\maketitle

\section{Introduction}\label{sec1}
Graph domination is an established area of graph theory with an extremely rich literature and a number of applications. Given a graph $G=(V,E)$ a set $S \subseteq V$ is \emph{dominating} if every vertex $x \in V\setminus S$ is adjacent to a vertex in $S$. A dominating set of $G$ with minimum cardinality is called a \emph{minimum dominating set} of $G$. The cardinality of a minimum dominating set of $G$ is the \emph{domination number} of $G$ and is denoted by $\gamma(G)$. The \textsc{Minimum Domination} problem is to find a dominating set of cardinality $\gamma(G)$. The \textsc{Minimum Domination} problem and its variations have numerous applications in real world problems including social networks, facility location problems, routing problems. An extensive overview on graph domination can be found in three recent monographs~\citep{dom1,dom2,dom5}.

\medskip
The domination game was introduced in~\citep{game1} using the approach of building a dominating set of a graph one vertex at a time. The game is played on a graph $G$ by two players, named Dominator and Staller, who are building a dominating set of $G$. These two players have the opposite goals in the game: one wants the game to end in as few moves as possible while the other one wants to extend the length of the game. They are taking turns in which a player chooses a vertex that has to dominate a vertex that has not been dominated by any of the previously chosen vertices. The game ends  when there is no more vertex to choose, that is, the set of vertices chosen during the game is a dominating set. Thus, during the game a sequence of vertices is selected. In several papers, authors considered a sequence obtained by the same basic rule, yet assuming that only the slow player plays the game. This leads to the following definitions.

\medskip
Let $S=(v_1,v_2, \ldots,v_k)$ be a sequence of vertices of $G$, and let $\widehat{S}=\{v_1,\ldots,v_k\}$. 
A sequence $S=(v_1, v_2, \ldots, v_k)$ is a {\em closed neighborhood sequence} if $N[v_i] \setminus \bigcup_{j=1}^{i-1}N[v_j] \ne\emptyset,$
holds for every $i\in\{2, 3, \ldots,k\}$. If, in addition, $\widehat{S}$ is a dominating set of $G$, then $S$ is a \emph{dominating sequence} of $G$. 
Clearly, the length $k$ of $S$ is bounded from below \textcolor{black}{by} $\gamma(G)$. A dominating sequence of maximum length in $G$ is a \emph{Grundy dominating sequence} (or, {\em GD-sequence} for short) of $G$. The cardinality of such a sequence is called the \emph{Grundy domination number} of $G$ and is denoted by $\gamma_{gr}(G)$.

\medskip
These concepts were introduced and studied in 2014 by Bre{\v{s}}ar, Gologranc, Milani\v{c}, Rall and Rizzi~\citep{breva}, where motivation came from the domination game as described above.  In addition, Grundy domination presents the worst-case scenario in the process of the online update of a dominating set in the expanding network. 
In $2021$, domination games as well as Grundy domination were comprehensevely surveyed in the book~\citep{game}. 

\medskip
The optimization version of the Grundy domination problem  is to find a GD-sequence (that is, a dominating sequence of maximum length) in a graph $G$. From the computational complexity point of view the decision version of the problem is defined as follows:
\begin{center}
\fbox{
	\parbox{\textwidth}{
		\textsc{Grundy Domination Decision} (GDD)\\
		\textit{Input}: A graph $G = (V,E)$ and $k \in {\mathbb{Z}}^+.$\\
		\textit{Question}: Is there a dominating sequence of $G$ of length at least $k$?}}
\end{center}

\medskip
\noindent In the seminal paper~\citep{breva}, the authors proved that the GDD problem is NP-complete for chordal graphs. They also proved that a GD-sequence in trees, cographs and split graphs can be computed in polynomial time~\citep{breva}. An additional study of Grundy domination in forests was used to give a partial confirmation of a conjecture concerning the Grundy domination number in strong products of graphs~\citep{bell-2021}. Several combinatorial results have also been established for the parameter and its relatives in the literature~\citep{bbgk,camp-2021,er-2019,hay1,l-2019}. Concerning the computational complexity,  it was shown that the GDD problem can be solved in polynomial time for interval graphs and Sierpi\'{n}ski graphs~\citep{gninterval}, as well as on some $X$-join products, lexicographic products and related classes of graphs~\citep{nt-2018}. 
A hierarchy presenting relationships between some classes of graphs that are relevant for this paper are shown in Fig.~\ref{fig:hier}.

\begin{figure}[!h]
\centering
\begin{tikzpicture}[scale=0.9, style=thick]

 \draw (5,5)     node[inner sep=5pt,draw] (a) {General};
 \draw (4,3.5)     node[inner sep=5pt,draw] (b) {Cograph};
 \draw (0,4)     node[inner sep=5pt,draw] (c) {Chordal};
 \draw (6.5,3.5)     node[inner sep=5pt,draw] (d) {Co-bipartite};
 \draw (10,4)     node[inner sep=5pt,draw] (e) {Bipartite};
 \draw (10,2.5)     node[inner sep=5pt,draw] (f) {Chordal Bipartite};
 \draw (10,1)     node[inner sep=5pt,draw] (g) {Convex Bipartite};
 \draw (10,-0.5)     node[inner sep=5pt,draw] (h) {Bipartite Permutation};
 \draw (10,-2)     node[inner sep=5pt,draw] (i) {Chain};
 \draw (-2,2.5)     node[inner sep=5pt,draw] (j) {Interval};
 \draw (0,1)     node[inner sep=5pt,draw] (k) {Split};
 \draw (2,2.5)     node[inner sep=5pt,draw] (l) {Trees};
 
  \draw[->] (a) -- (b);
  \draw[->] (a) -- (c);
  \draw[->] (a) -- (d);
  \draw[->] (a) -- (e);
  \draw[->] (e) -- (f);
  \draw[->] (f) -- (g);
  \draw[->] (g) -- (h);
  \draw[->] (h) -- (i);
   \draw[->] (c) -- (j);
    \draw[->] (c) -- (k);
     \draw[->] (c) -- (l);

\end{tikzpicture}
\caption{Hierarchy of selected graph classes.}
\label{fig:hier}
\end{figure}
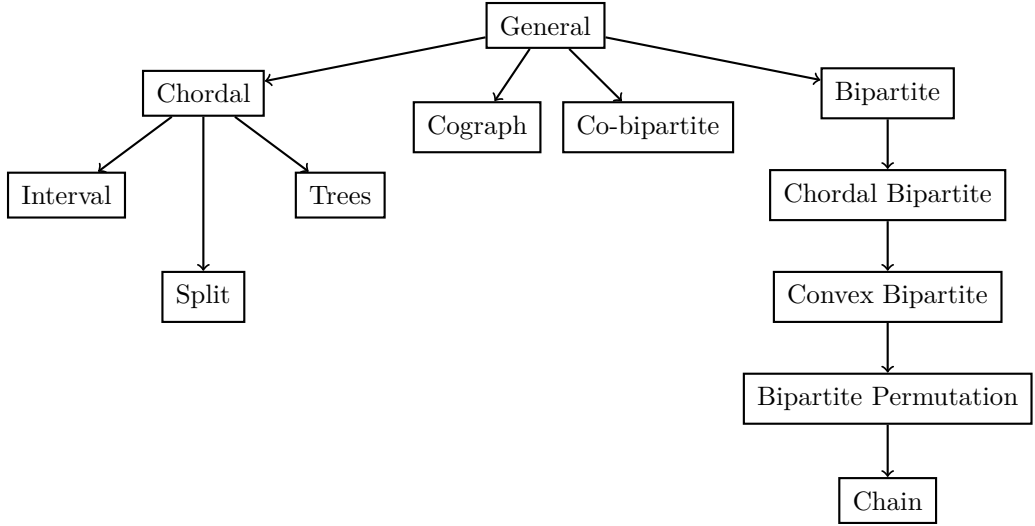

\medskip
A close relation between dominating sequences in graphs and certain covering sequences in hypergraphs was found in the seminal paper~\citep{breva}. Given a hypergraph $\cal H=(X, E)$ with no isolated vertices, an \emph{edge cover} of $\cal H$ is a family of (hyper)edges from ${\cal E}$ whose union is the vertex set $\cal X$. The smallest cardinality of an edge cover of $\cal H$ is the \emph{edge cover number} of $\cal H$ and is denoted by $\rho(\cal H)$.

\medskip
Now, consider a sequence of edges ${\cal C}=(C_1,\ldots,C_r)$ of a hypergraph $\cal H$. If for each $i$, $i \in [r]$,  $C_i$ contains a vertex not contained in $C_j$, for all $j<i$, then ${\cal C}$ is a \emph{legal edge sequence} of $\cal H$. If ${\cal C}=(C_1, C_2, \ldots, C_r)$ is a legal edge sequence and the set $\widehat{\cal C}=\{C_1, C_2, \ldots, C_r\}$ is an edge cover of $\cal H$, then ${\cal C}$ is an \emph{edge covering sequence}.
An edge covering sequence of maximum length in $\cal H$ is a \emph{Grundy covering sequence} of $\cal H$. The length of such a sequence is the \emph{Grundy cover number}, $\rho_{gr}(\cal H)$, of $\cal H$. Given a hypergraph $\cal H=(X, E)$, the \textsc{Grundy Covering} problem is to find a Grundy covering sequence, and thus establishing the Grundy cover number, and its decision version is the \textsc{Grundy Covering Decision} (GCD) problem.
It was shown in \citep{breva} that the GDD problem is NP-complete by reduction from the GCD problem, while for the NP-completeness of the GCD problem a reduction from the classical {\sc Feedback Arc Set} problem was used. 

\medskip
In Section~\ref{sec:pre}, we establish basic notation and mention several preliminary results. In Section~\ref{sec:np}, we prove that the GDD problem is NP-complete even when restricted to bipartite or co-bipartite graphs. In contrast, we present a linear-time algorithm for determining the Grundy domination number of chain graphs in Section~\ref{sec:algo}. In the final section, we add a few comments and open problems. 

\section{Preliminaries}
\label{sec:pre}
Let $[n]=\{1,\ldots, n\}$ for any $n\in\mathbb{N}$. Given a graph $G$, the \emph{neighborhood} of a vertex $x$ is $N_G(x)=\{y\in V(G):\, xy \in E(G)\}$, and the {\em closed neighborhood} of $x$ is $N_G[x]=N_G(x)\cup\{x\}$. Vertices $u$ and $v$ in a graph $G$ are {\em closed twins} (respectively, {\em open twins}) if $N_G[u]=N_G[v]$ (respectively, $N_G(u)=N_G(v)$).  We may omit the indices in the above definitions if the graph $G$ is understood from the context. 
The following statement has a straightforward proof using definitions. 

\medskip
\begin{proposition}\label{pro:basic}
Let $S$ be a (Grundy) dominating sequence in a graph $G$. If $u,v \in V(G)$ such that $N[u] \subseteq N[v]$ and $u,v \in \hat{S}$, then $u$ appears before $v$ in $S$.
\end{proposition}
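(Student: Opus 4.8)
The plan is to argue by contradiction, exploiting the ``footprint'' that each vertex must contribute when it is added to the sequence. Suppose $S=(v_1,\dots,v_k)$ is a (Grundy) dominating sequence and that $u,v\in\widehat S$ with $N[u]\subseteq N[v]$, but $v$ appears before $u$ in $S$; say $v=v_i$ and $u=v_j$ with $i<j$. By the defining property of a closed neighborhood sequence, when $u=v_j$ is appended there must exist a vertex $w\in N[v_j]\setminus\bigcup_{\ell=1}^{j-1}N[v_\ell]$, i.e. $w$ is dominated by $u$ but by none of $v_1,\dots,v_{j-1}$.

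The key step is then the observation that $w\in N[u]\subseteq N[v]=N[v_i]$, so $w$ is dominated by $v_i$. Since $i\le j-1$, this means $w\in\bigcup_{\ell=1}^{j-1}N[v_\ell]$, contradicting the choice of $w$. Hence no such inversion is possible, and $u$ must appear before $v$ in $S$.

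I would also briefly note the edge case: one should check that the argument does not break when $u=v$ (it does not, since then ``appears before'' is vacuous, and in any case $N[u]\subseteq N[v]$ with distinct $u,v$ is the intended reading) and that it uses only the legal-sequence condition together with $u,v\in\widehat S$; membership of $v$ in $\widehat S$ is what guarantees $v=v_i$ for some index $i$, and the hypothesis $u\in\widehat S$ gives the index $j$. No genuine obstacle is expected here — the statement is essentially immediate from unwinding the definitions, and the only thing to be careful about is making sure the index $i$ of $v$ is strictly less than the index $j$ of $u$ in the assumed-for-contradiction ordering, so that $N[v_i]$ is among the neighborhoods that $v_j$'s new private vertex must avoid.
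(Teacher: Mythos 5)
Your proof is correct and is precisely the ``straightforward proof using definitions'' that the paper alludes to without writing out: assuming $v$ precedes $u$, the private vertex $w$ that $u$ must footprint lies in $N[u]\subseteq N[v]$ and is therefore already covered, a contradiction. No gaps; the edge-case remarks are fine but unnecessary.
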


\medskip
If $S$ is a closed neighborhood sequence, then we say that $v_i$ \emph{footprints} the vertices from $N[v_i] \setminus \cup_{j=1}^{i-1}N[v_j]$, and that $v_i$ is the \emph{footprinter} of every vertex $u\in N[v_i] \setminus \cup_{j=1}^{i-1}N[v_j]$.
If $S_1=(v_1, v_2, \ldots , v_n)$ and $S_2=(u_1, u_2, \ldots , u_m)$, $n,m \geq \textcolor{black}{1},$ are two sequences, then the {\em concatenation} of $S_1$ and $S_2$ is the sequence $S_1 \oplus S_2=(v_1, v_2, \ldots , v_n, u_1, u_2, \ldots , u_m)$.

\medskip
Given a hypergraph $\cal H =(X,E)$, a \emph{legal transversal sequence} is a sequence $S=(v_1,  \ldots, v_k)$ of vertices from $\cal X$ such that for each $i$ there exists an edge $\mathcal{E}_i \in \mathcal{E}$ such that $v_i \in \mathcal{E}_i$ and $v_j \notin \mathcal{E}_i$ for all $j$, where $j<i$.  The maximum length of a legal transversal sequence in a hypergraph $\cal H$ is denoted by $\taugr({\cal H})$. The following result was proved in~\citep[Proposition 8.3]{total}.

\medskip
\begin{proposition}
\label{p:rho-tau}
For any hypergraph ${\cal H}$ we have $\taugr({\cal H})=\rhogr({\cal H})$.
\end{proposition}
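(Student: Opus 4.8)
The plan is to establish $\taugr(\mathcal{H})\le\rhogr(\mathcal{H})$ and $\rhogr(\mathcal{H})\le\taugr(\mathcal{H})$ separately, in each case by \emph{reversing} an optimal sequence. The key observation is that the defining condition of a legal edge (respectively, legal transversal) sequence is ``asymmetric'': a newly chosen object must contain something missing from all \emph{earlier} objects. Upon reversing the order, this becomes a statement about all \emph{later} objects, which is exactly what the witness data of the original sequence guarantees.

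For $\rhogr(\mathcal{H})\le\taugr(\mathcal{H})$, I would start from a Grundy covering sequence $\mathcal{C}=(C_1,\dots,C_r)$ with $r=\rhogr(\mathcal{H})$. Since $\mathcal{C}$ is a legal edge sequence, for every $i\in[r]$ there is a vertex $w_i\in C_i$ with $w_i\notin C_j$ for all $j<i$ (these vertices are then automatically pairwise distinct). I claim the reversed sequence $(w_r,w_{r-1},\dots,w_1)$ is a legal transversal sequence: fixing $i$, the vertices preceding $w_i$ in it are precisely $w_{i+1},\dots,w_r$, and the edge $C_i$ is a suitable witness, because $w_i\in C_i$, while for each $j>i$ the property of $w_j$ (namely $w_j\notin C_\ell$ for all $\ell<j$, applied with $\ell=i$) gives $w_j\notin C_i$. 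Hence $\taugr(\mathcal{H})\ge r=\rhogr(\mathcal{H})$.

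For the reverse inequality, I would take a legal transversal sequence $S=(v_1,\dots,v_k)$ of maximum length $k=\taugr(\mathcal{H})$, together with witnessing edges $\mathcal{E}_1,\dots,\mathcal{E}_k$ (so $v_i\in\mathcal{E}_i$ and $v_j\notin\mathcal{E}_i$ for all $j<i$). The same reversal shows that $\mathcal{C}'=(\mathcal{E}_k,\mathcal{E}_{k-1},\dots,\mathcal{E}_1)$ is a legal edge sequence: for $\mathcal{E}_i$ the preceding edges are $\mathcal{E}_{i+1},\dots,\mathcal{E}_k$, and $v_i\in\mathcal{E}_i$ while $v_i\notin\mathcal{E}_j$ for every $j>i$ by the witnessing property of $\mathcal{E}_j$. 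The sequence $\mathcal{C}'$ need not be an edge cover, but it can be greedily completed to one: while $\widehat{\mathcal{C}'}$ misses some vertex $x$, append any edge containing $x$ (one exists because $\mathcal{H}$ has no isolated vertices); this preserves legality since $x$ is new to the sequence, and the process terminates because $X$ is finite, yielding an edge covering sequence of length at least $k$. Thus $\rhogr(\mathcal{H})\ge k=\taugr(\mathcal{H})$, and combining the two bounds proves the equality.

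I do not anticipate a genuine obstacle: the argument is short and self-contained. The only points requiring a moment's care are verifying that reversal really does preserve legality — i.e., that the forward sequence's witness data certifies the backward sequence — and the (routine) remark that a legal edge sequence always extends to an edge covering sequence, which is what allows the first half of the argument to conclude even though legal transversal sequences carry no covering requirement.
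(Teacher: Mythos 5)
Your proof is correct: both reversal arguments are valid (the witness vertices $w_i$ are indeed pairwise distinct, the witness edges $\mathcal{E}_i$ certify legality of the reversed edge sequence, and the greedy completion to an edge cover is needed and handled properly since legal transversal sequences carry no covering requirement). Note, however, that the paper does not prove this proposition itself---it cites it as Proposition~8.3 of the reference \citep{total}---so there is no in-paper proof to compare against; your reversal argument is the standard self-contained proof of this duality and matches the approach used in that reference.
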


\medskip
A set of vertices $A \subseteq V(G)$ is called an \emph{independent set} of $G$ if no two vertices of $A$ are adjacent in $G$. A \emph{maximum independent set} is an independent set of maximum cardinality. The size of a maximum independent set in $G$ is the \emph{independence number} of $G$ and is denoted by $\alpha(G)$.

\smallskip
A \emph{bipartite graph} is a graph whose vertex set can be partitioned into two independent sets. A \emph{co-bipartite graph} is a graph, which is the complement of a bipartite graph. A bipartite graph $G = (X, Y,E)$ is a \textit{chain graph} if the neighborhoods of the vertices of $X$ form a chain, that is, the vertices of $X$ can be linearly ordered, say  $\{x_1,x_2, \ldots ,x_{n_1}\}$ such that $N(x_1) \subseteq N(x_2) \subseteq \cdots \subseteq N(x_{n_1})$, where $n_1 = \vert X\vert$. If $G=(X,Y,E)$ is a chain graph, then one can easily see that the neighborhoods of the vertices of $Y$ also form a chain. If $n_2 = \vert Y\vert$, an ordering $\alpha=(x_1,x_2, \ldots ,x_{n_1},y_1,y_2, \ldots ,y_{n_2})$ is called a \textit{chain ordering} if  $N(x_1) \subseteq N(x_2) \subseteq \cdots \subseteq N(x_{n_1})$ and $N(y_1) \supseteq N(y_2) \supseteq \cdots \supseteq N(y_{n_2})$. 

\smallskip
If a vertex $x_{i}$ appears before $x_{j}$ in chain ordering, we write $x_{i}<x_{j}$. Given a chain graph $G$, a chain ordering of $G$ can be computed in linear time \citep{hegge}. Now, suppose $A = \{x_{i_1}, x_{i_2}, \ldots, x_{i_r}\}, B =\{x_{j_1}, x_{j_2}, \ldots, x_{j_{r'}}\}$ be two disjoint subsets of $X$ such that $x_{i_p} < x_{i_{q}}$ and $x_{j_p} < x_{j_{q}}$ for $p<q$. Then $(A) \oplus (B)$ denotes the sequence $(x_{i_1}, \ldots, x_{i_r}, x_{j_1},  \ldots, x_{j_{r'}})$. Similar notation can be used for vertices in $Y$. 

\smallskip
Recall that two vertices $u,v$ are called open twins if $N(u) = N(v)$. We define a relation $R$ on $X$ such that vertices $x,x' \in X$ are related by $R$ if and only if they are open twins. Clearly, $R$ is an equivalence relation, and let $X_1, X_2, \ldots, X_k$ be the parts in $X$ that arise from $R$. Without loss of generality, for a chain graph $G=(X,Y,E)$, we may also assume that $N(X_{1})\subseteq N(X_{2}) \cdots  \textcolor{black}{\subseteq} N(X_{k})$. We denote \textcolor{black}{the} set of vertices in $N(X_1)$ by $Y_1$. For $i \in \{2, 3, \ldots, k\}$, let $Y_i = N(X_i) \setminus \cup_{j=1}^{i-1} N(X_j)$. 
It is easy to see that every two vertices from $Y_j$ are open twins for all $j \in [k]$. The following statement follows from the construction of the sets $X_i$ and $Y_j$.

\medskip
\begin{Ob}\label{chainpartition}
Let $X_1, X_2, \ldots, X_k$ and $Y_1, Y_2, \ldots, Y_k$ be the subsets of vertices of $G$ as defined above. Then, $N(X_i) = \bigcup_{r=1}^{i} Y_r$ and $N(Y_j) = \bigcup_{r=j}^{k}X_r$ for each $i,j \in [k]$.
\end{Ob}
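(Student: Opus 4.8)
The plan is to derive both identities directly from the construction, using the key structural fact that the chain of set-neighborhoods is \emph{strictly} increasing. Indeed, since $X_1,\dots,X_k$ are precisely the classes of the open-twin relation on $X$, distinct classes have distinct neighborhoods, so $N(X_1)\subsetneq N(X_2)\subsetneq\cdots\subsetneq N(X_k)$. In particular $\bigcup_{j=1}^{i-1}N(X_j)=N(X_{i-1})$ for every $i\ge 2$, hence $Y_i=N(X_i)\setminus N(X_{i-1})\neq\emptyset$, and the sets $Y_1,\dots,Y_k$ are pairwise disjoint. (Here we use, as is standard for this parameter, that $G$ has no isolated vertices, so that $N(X_1)\neq\emptyset$ and $Y_1$ is nonempty as well.)

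First I would prove $N(X_i)=\bigcup_{r=1}^{i}Y_r$ by induction on $i$. The base case $i=1$ is the definition $Y_1=N(X_1)$. For the inductive step, the induction hypothesis gives $\bigcup_{r=1}^{i-1}Y_r=N(X_{i-1})$, and since $Y_i=N(X_i)\setminus N(X_{i-1})$ with $N(X_{i-1})\subseteq N(X_i)$, we obtain $\bigcup_{r=1}^{i}Y_r=N(X_{i-1})\cup\bigl(N(X_i)\setminus N(X_{i-1})\bigr)=N(X_i)$.

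For the second identity, fix $j\in[k]$ and pick any $y\in Y_j$ (possible since $Y_j\neq\emptyset$). All vertices of $Y_j$ are open twins, so $N(Y_j)=N(y)$; and since $G$ is bipartite with parts $X$ and $Y$, we have $N(y)\subseteq X=X_1\cup\cdots\cup X_k$. As the vertices inside each class $X_r$ are open twins, it remains only to decide for each $r$ whether $y$ is adjacent to all of $X_r$ or to none of it. If $r\ge j$, then $y\in Y_j\subseteq N(X_j)\subseteq N(X_r)$, so $y$ has a neighbor in $X_r$ and hence is adjacent to every vertex of $X_r$. If $r<j$, then by construction $y\notin N(X_r)$ (the definition of $Y_j$ removes $\bigcup_{s<j}N(X_s)$, which contains $N(X_r)$), so $y$ has no neighbor in $X_r$. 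Combining the two cases gives $N(y)=\bigcup_{r=j}^{k}X_r$, as claimed.

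I do not expect a genuine obstacle here: the statement is a bookkeeping consequence of the way the classes $X_i$ and the layers $Y_j$ were defined. The only points that call for a little attention are (i) noticing that passing to open-twin classes makes the neighborhood chain strictly increasing, which is what guarantees each $Y_j$ is nonempty so that $N(Y_j)$ is a meaningful object, and (ii) invoking the open-twin property in the right places — once to collapse $N(Y_j)$ to $N(y)$ for a single $y\in Y_j$, and once more to reduce adjacency between $y$ and a class $X_r$ to an all-or-nothing question.
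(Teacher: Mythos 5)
Your proof is correct and follows exactly the route the paper has in mind: the paper states this observation without proof, asserting it ``follows from the construction,'' and your argument is precisely the careful unfolding of that construction (strictness of the neighborhood chain from the open-twin classes, the telescoping union for $N(X_i)$, and the all-or-nothing adjacency between $Y_j$ and each class $X_r$). Nothing further is needed.
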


\section{NP-completeness results}
\label{sec:np}
As mentioned in the introduction, the GDD problem is NP-complete for general graphs, and also when restricted to chordal graphs. In the following subsections, we prove that the problem remains NP-complete for bipartite and co-bipartite graphs.

\subsection{Bipartite graphs}

In this subsection, we prove the NP-completeness of the GDD problem for bipartite graphs. We reduce the GCD problem for hypergraphs to the GDD problem for bipartite graphs. Given a hypergraph $\cal H = (X,E)$ with $| \mathcal X| = n$ and $ |\mathcal{E}| = m$, $(n,m \geq 2)$, we construct an instance $G_{\cal H}$ of the GDD problem, where $G_{\cal H}$ is a bipartite graph, as follows.

\smallskip
Let $V(G_{\cal H}) = A \cup X' \cup {\mathcal{E}}' \cup B$, where $A =\{a_1, a_2, \ldots, a_n\}$ and $B =\{b_1, b_2, \ldots, b_m\}$. The sets $X'$ and $ {\mathcal{E}}'$ contain $n$ and $m$ vertices respectively, where each vertex of $X'$ corresponds to a vertex of $\cal X$ in the hypergraph $\cal H$ and each vertex of ${\mathcal{E}}'$ correspond to an edge of $\cal H$. For an edge ${\mathcal{E}}_i \in \mathcal{E}$, we denote the corresponding vertex in ${\mathcal{E}}'$ by $e_i$. All sets $A,X',{\mathcal{E}}',$ and $B$ are indepedent sets in $G_{\cal H}$, and $A\cup X'$ induces the complete bipartite graph $K_{n,n}$, while ${\mathcal{E}}'\cup B$ induces the complete bipartite graph $K_{m,m}$.  Now, a vertex $x$ of $X'$ is adjacent to a vertex of $e_i \in {\mathcal{E}}'$ in $G_{\cal H}$ if and only if $x \in {\mathcal{E}}_i$ in $\cal H$.

\smallskip
 Clearly, $G_{\cal H}$ is a bipartite graph. See Fig.~\ref{fig:2}, which presents  the construction of the graph $G_{\cal H}$ from a hypergraph $\cal H$, which is given by $(\mathcal X =\{x_1, x_2, x_3, x_4\}, \mathcal{E} = \{\mathcal{E}_1, \mathcal{E}_2, \mathcal{E}_3, \mathcal{E}_4, \mathcal{E}_5\})$, where $\mathcal{E}_1 = \{x_1, x_2, x_4\}$, $\mathcal{E}_2 = \{x_2, x_3\}$, $\mathcal{E}_3 = \{x_1, x_2\}$, $\mathcal{E}_4 = \{x_2, x_3, x_4\}$ and $\mathcal{E}_5 = \{x_1,x_3, x_4\}$.

\begin{figure}[!h]
\begin{center}
\begin{tikzpicture}[scale=0.7]

	\vertex (a1) at (-5, 0) [fill=black]{};
	\vertex (a2) at (-5, 2) [fill=black]{};
	\vertex (a3) at (-5, 4) [fill=black]{};
	\vertex (a4) at (-5, 6) [fill=black]{};
	
	\vertex (x1) at (-2, 0) [fill=black,label=below:$x_1$]{};
	\vertex (x2) at (-2, 2) [fill=black,label=below:$x_2$]{};
	\vertex (x3) at (-2, 4) [fill=black,label=above:$x_3$]{};
	\vertex (x4) at (-2, 6) [fill=black,label=above:$x_4$]{};
	
	\vertex (e1) at (2, 0) [fill=black,label=below:$e_1$]{};
	\vertex (e2) at (2, 2) [fill=black,label=below:$e_2$]{};
	\vertex (e3) at (2, 4) [fill=black,label=above:$e_3$]{};
	\vertex (e4) at (2, 6) [fill=black,label=above:$e_4$]{};
	\vertex (e5) at (2, 8) [fill=black,label=above:$e_5$]{};
	
	\vertex (b1) at (5, 0) [fill=black]{};
	\vertex (b2) at (5, 2) [fill=black]{};
	\vertex (b3) at (5, 4) [fill=black]{};
	\vertex (b4) at (5, 6) [fill=black]{};
	\vertex (b5) at (5, 8) [fill=black]{};
	
	\draw (-5,-1.2) node {{\large $A$}};
	\draw (-2,-1.2) node {{\large $X'$}};
	\draw (2,-1.2) node {{\large ${\cal E}'$}};
	\draw (5,-1.2) node {{\large $B$}};
	
	\draw (a1)--(x1)--(a2)--(x2)--(a3)--(x3)--(a4)--(x4)--(a1);	
	\draw (a1)--(x2)--(a3)--(x4)--(a2)--(x3)--(a1);	
	\draw (x1)--(a3);	
	\draw (x1)--(a4)--(x2);
	
	\draw (b1)--(e1)--(b2)--(e2)--(b3)--(e3)--(b4)--(e4)--(b1)--(e5)--(b2);	
	\draw (b1)--(e2)--(b3)--(e4)--(b2)--(e3)--(b1);	
	\draw (b4)--(e1)--(b3)--(e5)--(b4);	
	\draw (b3)--(e4)--(b2);
	\draw (e5)--(b5)--(e4);
	\draw (e3)--(b5)--(e2); \draw (e1)--(b5);
	
	\draw (x1)--(e1)--(x2); \draw (e1)--(x4);
	\draw (x2)--(e2)--(x3); 
	\draw (x1)--(e3)--(x2); 
	\draw (x2)--(e4)--(x3); \draw (e4)--(x4);
	\draw (x1)--(e5)--(x3); \draw (e5)--(x4);

\end{tikzpicture}
\end{center}
\caption{Illustration of the construction of $G_{\cal H}$ from $\cal H$.}
\label{fig:2}
\end{figure}
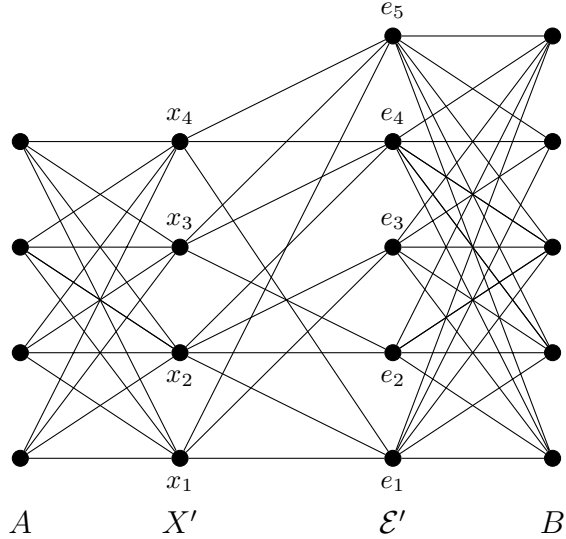

\begin{theorem}\label{nphard_bip_equiv}
If $\cal H  = (X,E)$ is a hypergraph with $|\mathcal X| = n$ and $ |\mathcal{E}| = m$, where $n,m \geq 2$, then ${\rho}_{gr} (\mathcal H) \geq k$ if and only if ${\gamma}_{gr} (G_{\cal H}) \geq n+m+k$.
\end{theorem}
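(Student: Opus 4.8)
The plan is to establish the sharper identity $\gamma_{gr}(G_{\mathcal H})=n+m+\rho_{gr}(\mathcal H)$; the stated equivalence then follows immediately, since $\rho_{gr}(\mathcal H)\ge k$ if and only if $n+m+\rho_{gr}(\mathcal H)\ge n+m+k$. Both inequalities rest on four structural features of $G_{\mathcal H}$: playing a vertex of $A$ dominates all of $X'$ at once (and, symmetrically, a vertex of $X'$ dominates all of $A$); playing a vertex of $B$ dominates all of $\mathcal E'$ (and symmetrically); and a vertex $x'\in X'$ can footprint $e_i\in\mathcal E'$ only when $x\in\mathcal E_i$, and dually $e_i$ can footprint $x'$ only when $x\in\mathcal E_i$.

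For the inequality $\gamma_{gr}(G_{\mathcal H})\ge n+m+\rho_{gr}(\mathcal H)$ (which yields the ``only if'' direction), fix a Grundy covering sequence $\mathcal C=(\mathcal E_{i_1},\dots,\mathcal E_{i_r})$ of $\mathcal H$, so that $r=\rho_{gr}(\mathcal H)\ge k$ and $\mathcal C$ is in particular a legal edge sequence whose edges cover $\mathcal X$. I would exhibit the sequence $S=(b_1,\dots,b_m)\oplus(e_{i_1},\dots,e_{i_r})\oplus(a_1,\dots,a_n)$ and check directly that it is a dominating sequence of $G_{\mathcal H}$: $b_1$ footprints $\{b_1\}\cup\mathcal E'$ and each later $b_j$ footprints $\{b_j\}$; then, using legality of $\mathcal C$, each $e_{i_j}$ footprints the nonempty set $\{x'\in X':x\in\mathcal E_{i_j}\setminus(\mathcal E_{i_1}\cup\dots\cup\mathcal E_{i_{j-1}})\}$; since $\mathcal C$ covers $\mathcal X$, all of $X'$ is dominated after this block, so each $a_l$ footprints $\{a_l\}$; and $\widehat S$ dominates $G_{\mathcal H}$ because already $a_1$ dominates $X'$ and $b_1$ dominates $\mathcal E'$. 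Consequently $|S|=m+r+n\ge n+m+k$.

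For the reverse inequality $\gamma_{gr}(G_{\mathcal H})\le n+m+\rho_{gr}(\mathcal H)$, let $S$ be any dominating sequence and estimate $|\widehat S\cap(A\cup X')|$ and $|\widehat S\cap(\mathcal E'\cup B)|$ separately. Consider the first vertex of $S$ lying in the block $A\cup X'$: if it lies in $A$, then (by the structural features) every vertex of $\widehat S\cap A$ footprints itself, hence $|\widehat S\cap A|=n-x_0$ where $x_0$ is the number of vertices of $A$ footprinted by the first vertex of $\widehat S\cap X'$ (if none exists, $x_0=0$); if instead it lies in $X'$, then $|\widehat S\cap A|\le1$; and symmetrically for the block $\mathcal E'\cup B$. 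Call a vertex of $\widehat S\cap X'$ \emph{active} if it footprints a vertex of $\mathcal E'$, and a vertex of $\widehat S\cap\mathcal E'$ \emph{active} if it footprints a vertex of $X'$; one checks that every non-active vertex of $\widehat S\cap X'$ other than the first one footprints only itself, which in the first situation above gives $|\widehat S\cap(A\cup X')|\le n+(\text{number of active }X'\text{-vertices})$ and in the second situation gives $|\widehat S\cap(A\cup X')|\le n+1$, with the analogous bounds on $\mathcal E'\cup B$. Now $\mathcal H$ enters: reading the active vertices of $X'$ in the order of $S$ and identifying them with the corresponding vertices of $\mathcal X$ yields a legal transversal sequence of $\mathcal H$ (``footprinting a hitherto undominated $\mathcal E'$-vertex'' is exactly ``lying in an edge disjoint from all earlier chosen vertices''), so their number is at most $\taugr(\mathcal H)=\rho_{gr}(\mathcal H)$ by Proposition~\ref{p:rho-tau}; dually, the active vertices of $\mathcal E'$ give a legal edge sequence of $\mathcal H$, of length at most $\rho_{gr}(\mathcal H)$. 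A positional comparison of where active $X'$- and active $\mathcal E'$-vertices can occur, together with $\rho_{gr}(\mathcal H)\ge2$ (valid since $n,m\ge2$), is then used to conclude that the two active counts, plus the stray $+1$'s coming from ``first-mover'' vertices, add up to at most $\rho_{gr}(\mathcal H)$, whence $|\widehat S|\le n+m+\rho_{gr}(\mathcal H)$.

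The step I expect to be the main obstacle is precisely this last one in the reverse direction: ruling out that a long legal transversal sequence shows up on the $X'$-side at the same time as a long legal edge sequence on the $\mathcal E'$-side, and showing that the surplus contributed by the ``first-mover'' vertices (the first vertices of $\widehat S$ in $X'$ and in $\mathcal E'$, which may legally footprint an entire block of $A$, resp.\ $B$, without being active) is absorbed by the slack $n-|\widehat S\cap A|$ and $m-|\widehat S\cap B|$. This is where essentially all of the case analysis sits.
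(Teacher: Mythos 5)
Your forward direction is correct and coincides with the paper's: the witness $(b_1,\dots,b_m)\oplus(e_{i_1},\dots,e_{i_r})\oplus(a_1,\dots,a_n)$ is exactly the sequence used there. The reverse direction, however, contains a genuine gap, and it sits precisely where you flag it: you never prove that the number of active $X'$-vertices plus the number of active $\mathcal{E}'$-vertices (plus the first-mover surpluses) is at most $\rho_{gr}(\mathcal H)$. Each count is separately at most $\rho_{gr}(\mathcal H)$ by the legal-transversal and legal-edge-sequence arguments, but nothing in your outline prevents both from being large at once, and mixed sequences with active vertices on both sides genuinely exist: a dominating sequence may open with some $x'\in X'$ (footprinting all of $A$, itself, and part of $\mathcal{E}'$) followed by a vertex of $\mathcal{E}'$ (footprinting all of $B$ and part of $X'$). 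Deferring the decisive step to ``a positional comparison \dots is then used to conclude'' leaves the upper bound $\gamma_{gr}(G_{\mathcal H})\le n+m+\rho_{gr}(\mathcal H)$ unestablished; that comparison is the entire content of the hard direction.

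The paper avoids this counting problem by a different route. It first normalizes the sequence (Claims~1 and~2): by length-preserving exchanges one may assume that all $n$ vertices of $A$ appear and precede the first $X'$-vertex $x_0$ (if any), and all $m$ vertices of $B$ appear and precede the first $\mathcal{E}'$-vertex $e_0$ (if any); this is where all the case analysis lives. The payoff is Claim~3: the normalized sequence cannot meet both $X'$ and $\mathcal{E}'$, for if, say, $e_0$ precedes $x_0$, then by the time $x_0$ is played all of $A$ has appeared (dominating $A\cup X'$) and all of $B$ has appeared (dominating $B\cup\mathcal{E}'$), so $x_0$ footprints nothing --- a contradiction. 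The mixed case is thus eliminated outright rather than counted through, and the at least $k$ vertices of $\widehat S$ in $X'\cup\mathcal{E}'$ lie entirely on one side, yielding directly either an edge covering sequence or a legal transversal sequence of length at least $k$ (the latter converted via Proposition~\ref{p:rho-tau}). To repair your argument, either adopt this normalization or actually carry out the positional analysis showing that the footprinting budget shared between active $X'$- and active $\mathcal{E}'$-vertices, together with the slack $n-|\widehat S\cap A|$ and $m-|\widehat S\cap B|$, caps the total at $n+m+\rho_{gr}(\mathcal H)$; as written this is asserted, not proved.
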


\begin{proof}
In the proof, let $G=G_{\cal H}$. First, let ($\mathcal{E}_1, \mathcal{E}_2, \ldots, \mathcal{E}_{k'})$ be an edge covering sequence of length at least $k$ in $\cal H$. Then the sequence $(b_1, b_2, \ldots, b_m, e_1, e_2, \ldots, e_{k'}, a_1, a_2, \ldots, a_n)$ is a dominating sequence of length at least $n+m+k$ in $G$. Hence, ${\gamma}_{gr} (G) \geq n+m+k$.

\smallskip
For the converse, let us assume that ${\gamma}_{gr} (G) \geq n+m+k$ for some positive integer $k$. If $X' \cap \hat{S} \neq \emptyset$ for some dominating sequence $S$ of $G$, then $x_0$ denotes the first vertex in $S$ coming from $X'$, and if ${\mathcal{E}}' \cap \hat{S} \neq \emptyset$ for some dominating sequence $S$ of $G$, then $e_0$ denotes the first vertex in $S$ coming from ${\mathcal{E}}'$. First, we prove two auxiliary claims.

\begin{cl}\label{GDN_claim1}
There exists a dominating sequence $S$ of length at least $n+m+k$ in $G$ such that all vertices of $A$ appear in $S$ and if $X' \cap \hat{S} \neq \emptyset$, then all vertices of $A$ appear before $x_0$.
\end{cl}
\begin{proof}
Let $S$ be a dominating sequence of length at least $n+m+k$ in $G$ (which exists, since ${\gamma}_{gr} (G) \geq n+m+k$). Suppose there exists a vertex $a_i \in A$, which is not appearing in $S$. Then, there exists a vertex from $X'$ which is appearing in $S$ to footprint $a_i$. Hence, $X' \cap \hat{S} \neq \emptyset$ and $x_0$ footprints $a_i$. Let $P$ denotes the set of  vertices appearing before $x_0$ and $Q$ be the set of vertices appearing after $x_0$ in $S$. Now, two cases are possible.

\smallskip
\noindent
\textbf{Case 1:} {$Q \cap A = \emptyset$}.\\
In this case, either $P\cap A=\emptyset$ or $P$ contains some vertices of $A$. So, first assume that $P$ contains no vertex of $A$. Then, we see that no vertex of $A$ appears in the sequence $S$. If $x_0$ does not footprint any vertex in ${\mathcal{E}}'$, then we modify $S$ by appending vertices of $A$ in the order $(a_1, a_2, \ldots, a_n)$ just before $x_0$ and removing all vertices from $Q$ that footprinted a vertex of $X'$ along with the vertex $x_0$. Otherwise, if $x_0$ footprints some vertices of ${\mathcal{E}}'$, we perform the same modification without removing $x_0$. \textcolor{black}{Note that the number of vertices of $Q$ that footprint a vertex of $X'$ are at most $n -1$}. In either case, we removed at most $n$ vertices and we added $n$ new vertices to $S$, by which the so modified sequence $S$ is a dominating sequence of length at least $n+m+k$ in $G$, which satisfies the statement of the claim. 

Now, if $P$ contains some vertices of $A$, then no vertex of $Q$ footprints any vertex of $X'$. Again, if $x_0$ footprints only vertices of $A$, then we modify $S$ by appending vertices of {$A \setminus P$} in any order just before $x_0$ and removing the vertex $x_0$. Otherwise, if $x_0$ footprints also some vertices of ${\mathcal{E}}'$, we perform the same modification, but without removing $x_0$. In either case, we removed at most $1$ vertex and we added at least $1$ new vertex to the sequence $S$. With this, the so modified sequence $S$ is a dominating sequence of length at least $n+m+k$ in $G$, which satisfies the statement of the claim. 

\smallskip
\noindent
\textbf{Case 2:} { $|Q \cap A| = 1$}.\\
In this case, $P$ contains no vertex of $A$. Let $a_j$ be the vertex from $Q \cap A$ appearing in $S$. Note that the vertices in $S$, which footprint only vertices from $X'$, do not appear after $a_j$ in $S$. Note that there are at most $n-2$ vertices that appear in $S$ between $x_0$ and $a_j$ and footprint a vertex of $X'$, and denote the set of these vertices by $Q'$. If $x_0$ does not footprint any vertex in ${\mathcal{E}}'$, then we modify $S$ by appending vertices of $A$ in the order $(a_1, a_2, \ldots, a_n)$ just before $x_0$ and then removing all vertices of $Q'\cup\{x_0,a_j\}$. Otherwise, if $x_0$ footprints also some vertices of ${\mathcal{E}}'$, we perform the same modification without removing $x_0$. In either case, we removed at most $n-1$ vertices and we added $n-1$ new vertices to $S$. Hence, the so modified sequence $S$ is a dominating sequence of length at least $n+m+k$ in $G$, which satisfies the statement of the claim. 
\end{proof}

\smallskip
By the above claim, there exists a dominating sequence $S$ of length $n+m+k$ such that all vertices of $A$ appear in $S$ and they appear before $x_0$ if $X' \cap \hat{S} \neq \emptyset$.
The proof of Claim~\ref{GDN_claim2} follows similar lines as the proof of the Claim~\ref{GDN_claim1}.

\begin{cl}\label{GDN_claim2}
There exists a dominating sequence $S$ of length at least $n+m+k$ in $G$ such that all vertices of $B$ appear in $S$ and if ${\mathcal{E}}' \cap \hat{S} \neq \emptyset$, then all vertices of $B$ appear before $e_0$.
\end{cl}

Combining the above two claims we infer that there exists a dominating sequence $S$ of length at least $n+m+k$ in $G$ such that $|(X' \cup {\mathcal{E}}') \cap \hat{S}| \geq k$.
\begin{cl}\label{GDN_claim3}
Either $X' \cap \hat{S} = \emptyset$ or ${\mathcal{E}}' \cap \hat{S} = \emptyset$.
\end{cl}

\begin{proof}
If $X' \cap \hat{S} = \emptyset$, we are done. So, assume that $X' \cap \hat{S} \neq \emptyset$ and ${\mathcal{E}}' \cap \hat{S} \neq \emptyset$. Now, either $e_0$ appears before $x_0$ or $e_0$ appears after $x_0$. In the former case, we see that before the vertex $x_0$, all vertices of $G$ are footprinted (and thus dominated) using Claims~\ref{GDN_claim1} and~\ref{GDN_claim2}. So, $x_0$ does not footprint any vertex implying that this case is not possible. Similarly, the latter case is also not possible, which proves the claim.
\end{proof}

\smallskip
Now, if $X' \cap \hat{S} = \emptyset$, then we have that $|{\mathcal{E}}' \cap \hat{S}| \geq k$. In addition, by Claim~\ref{GDN_claim2}, since all vertices of $B$ appear in $S$ before any vertex of ${\mathcal{E}}'$ appears in $S$, the subsequence of $S$ of vertices in ${\mathcal{E}'}$ corresponds to an edge covering sequence in the hypergraph $\cal H$, which is of length at least $k$. 
Thus, ${\rho}_{gr} (\mathcal H) \geq k$, as desired.

\smallskip
Otherwise, if ${\mathcal{E}}' \cap \hat{S} = \emptyset$, then we derive that $|X' \cap \hat{S}|\ge k$, where the subsequence formed by vertices of $X'\cap \widehat{S}$ corresponds to a legal  transversal sequence of the hypergraph $\cal H$ of length at least $k$. By Proposition~\ref{p:rho-tau}, $\taugr({\cal H})=\rhogr({\cal H})$, and so ${\rho}_{gr} (\mathcal H) \geq k$. The proof of the theorem is complete. 
\end{proof}

Based on Theorem~\ref{nphard_bip_equiv} and earlier discussions we immediately derive the main result of this section.

\medskip
\begin{theorem}
The GDD problem is NP-complete for bipartite graphs.
\end{theorem}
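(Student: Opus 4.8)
The plan is to derive this immediately from the polynomial-time equivalence established in Theorem~\ref{nphard_bip_equiv}, together with the known NP-completeness of the GCD problem.

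\textbf{Membership in NP.} First I would observe that GDD restricted to bipartite graphs lies in NP: a dominating sequence $S=(v_1,\dots,v_r)$ with $r\ge k$ is a certificate of size polynomial in $|V(G)|$, and one verifies in polynomial time that $N[v_i]\setminus\bigcup_{j<i}N[v_j]\ne\emptyset$ holds for every $i$ and that $\widehat S$ is a dominating set of $G$.

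\textbf{Hardness.} For the reduction I would start from the GCD problem, which is NP-complete by~\citep{breva} (via a reduction from \textsc{Feedback Arc Set}, as recalled in the introduction). Given an instance $(\mathcal H,k)$ of GCD with $\mathcal H=(X,E)$, $|\mathcal X|=n$, $|\mathcal E|=m$, I would first dispose of the degenerate cases $n\le 1$ or $m\le 1$ directly, so that the hypothesis $n,m\ge 2$ of Theorem~\ref{nphard_bip_equiv} is satisfied; then I would construct $G_{\cal H}$ as in Section~\ref{sec:np}. This is a polynomial-time construction, since $G_{\cal H}$ has exactly $2n+2m$ vertices and its edges are obtained by reading off the incidences of $\mathcal H$ and adding the two complete bipartite graphs $K_{n,n}$ (on $A\cup X'$) and $K_{m,m}$ (on $\mathcal E'\cup B$). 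The graph $G_{\cal H}$ is bipartite with parts $X'\cup B$ and $A\cup\mathcal E'$, as each of $A,X',\mathcal E',B$ is independent and every edge runs between the two parts. By Theorem~\ref{nphard_bip_equiv}, $\rho_{gr}(\mathcal H)\ge k$ if and only if $\gamma_{gr}(G_{\cal H})\ge n+m+k$, so $(\mathcal H,k)$ is a yes-instance of GCD precisely when $(G_{\cal H},\,n+m+k)$ is a yes-instance of GDD. Hence GDD is NP-hard on bipartite graphs, and combined with membership in NP it is NP-complete.

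I do not expect a genuine obstacle in this final step: the entire combinatorial content sits in Theorem~\ref{nphard_bip_equiv}, and all that remains is to check that the reduction runs in polynomial time (immediate from the vertex and edge counts) and to handle the small cases $n\le 1$ or $m\le 1$ that the construction excludes.
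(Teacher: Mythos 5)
Your proposal is correct and follows exactly the route the paper intends: the paper derives this theorem "immediately" from Theorem~\ref{nphard_bip_equiv} together with NP membership and the NP-completeness of GCD, which is precisely what you spell out (including the easy polynomial-time and bipartiteness checks and the harmless degenerate cases $n\le 1$ or $m\le 1$).
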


\subsection{Co-bipartite graphs}
In this subsection, we prove the NP-completeness of the GDD problem for co-bipartite graphs. Here, we reduce the GDD problem for general graphs to the GDD problem for co-bipartite graphs. Given a graph $G = (V,E)$, where $V=\{v_1, v_2, \ldots, v_n\}$, we construct an instance $G'=(V_1 \cup V_2, E')$ of the GDD problem, where $G'$ is a co-bipartite graph, as follows.

\smallskip
The vertex set of $G'$ is  $V_1 \cup V_2$, where $V_1=\{v_i^1: v_i \in V\}$ and $V_2=\{v_i^2: v_i \in V\}$. The set of edges of $G'$ is given by $\{v_i^1 v_j^1 : 1\leq i<j \leq n\} \cup \{v_i^2 v_j^2 : 1\leq i<j \leq n\} \cup \{v_i^1 v_j^2 : v_j \in N_G[v_i], i,j \in [n] \}$. Note that $G'$ is a co-bipartite graph. Fig.~\ref{cobip} provides an illustration of the construction of $G'$ from $G$. 

\begin{figure}[!h]
\begin{center}
\begin{tikzpicture}[scale=0.7]
    
    \vertex(a) at (-5, 2) [fill=black,label=left:$a$]{};
	\vertex (b) at (-5, 0) [fill=black,label=left:$b$]{};
	\vertex (c) at (-5, -2) [fill=black,label=left:$c$]{};
	
	 \vertex(a1) at (5, 2) [fill=black,label=left:$a^1$]{};
	\vertex (b1) at (6, 0) [fill=black,label=left:$b^1$]{};
	\vertex (c1) at (5, -2) [fill=black,label=left:$c^1$]{};
	 \vertex(a2) at (10, 2) [fill=black,label=right:$a^2$]{};
	\vertex (b2) at (9, 0) [fill=black,label=right:$b^2$]{};
	\vertex (c2) at (10, -2) [fill=black,label=right:$c^2$]{};

	\draw (-6.5,0) node {{\Large $G:$}};
	\draw [-stealth,line width=5pt](-2,0) -- (0,0);
	
	\draw (3,0) node {{\Large $G':$}};
	
	\draw (a)--(b)--(c);
    \draw (a1)--(b1)--(c1)--(a1);
    \draw (a2)--(b2)--(c2)--(a2);
    \draw (a1)--(b2)--(c1)--(c2);
    \draw (a1)--(a2)--(b1)--(c2);
    \draw (b1)--(b2);
    
    \draw[rounded corners=10pt,dashed] (4,-2.7) rectangle (6.5,2.7);      
    \draw[rounded corners=10pt,dashed] (8.5,-2.7) rectangle (11,2.7);
    \draw (4,-3) node {{\large $V_1$}};
    \draw (11,-3) node {{\large $V_2$}};

\end{tikzpicture}
\end{center}
\caption{Construction of a co-bipartite graph $G'$ from a graph $G$, where $G=P_3$.}
\label{cobip}
\end{figure}
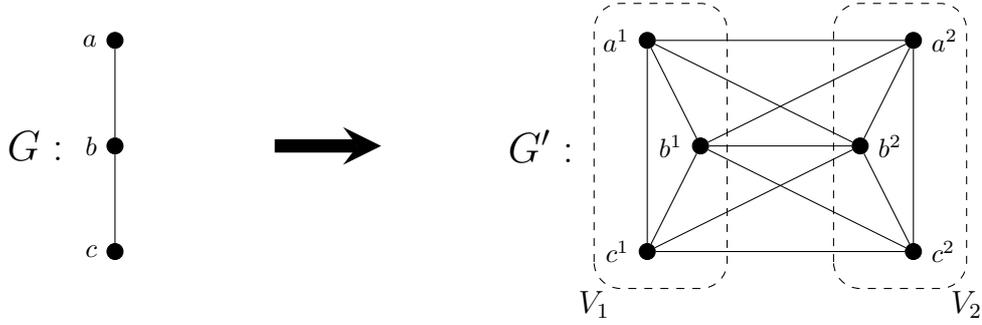

\medskip
\begin{cl}\label{cl:cobip} For a positive integer $k$, 
$\gamma_{gr}(G) \geq k$ if and only if $\gamma_{gr} (G') \geq k$.
\end{cl}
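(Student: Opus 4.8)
The plan is to establish the two directions of the equivalence by transferring closed neighborhood sequences between $G$ and $G'$. The central structural observation is that the construction of $G'$ is designed so that for every $v_i \in V$, we have $N_{G'}[v_i^1] = V_1 \cup \{v_j^2 : v_j \in N_G[v_i]\}$, so that $N_{G'}[v_i^1] \cap V_2 = \{v_j^2 : v_j \in N_G[v_i]\}$, while $N_{G'}[v_i^2] \supseteq V_2$. In particular, any vertex $v_i^1 \in V_1$ footprints inside $V_2$ exactly the copies of the vertices of $N_G[v_i]$ not yet dominated, mirroring the behaviour of $v_i$ in $G$; and any single vertex of $V_2$ already dominates all of $V_2$. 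I would record these facts first as a short preliminary computation.

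For the forward direction, suppose $S=(v_{i_1}, \ldots, v_{i_k})$ is a (Grundy) dominating sequence of $G$ of length at least $k$. I claim $S'=(v_{i_1}^1, \ldots, v_{i_k}^1)$ is a closed neighborhood sequence in $G'$: when $v_{i_t}$ footprints some $u \in N_G[v_{i_t}]$ in $G$ that was not dominated by $v_{i_1},\ldots,v_{i_{t-1}}$, then $u^2 \in V_2$ is footprinted by $v_{i_t}^1$ in $G'$, using the neighborhood description above. Moreover $\widehat{S'} \subseteq V_1$ dominates all of $V_1$ (since $V_1$ is a clique and $S'$ is nonempty as $k\ge 1$), and it dominates all of $V_2$ because $\widehat S$ dominates $V$. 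Hence $S'$ is a dominating sequence of $G'$ of length at least $k$, giving $\gamma_{gr}(G') \geq k$.

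For the converse, let $S$ be a Grundy dominating sequence of $G'$ with $|\widehat S| = \gamma_{gr}(G') \geq k$. The key reduction is to argue that we may assume $\widehat S \cap V_2$ is small — in fact that at most one vertex of $V_2$ appears, and that it contributes nothing beyond what $V_1$-vertices already achieve. Since any vertex of $V_2$ dominates all of $V_2$, once one vertex of $V_2$ has been played no later vertex of $V_2$ can footprint anything inside $V_2$; a later $v_j^2$ could only footprint inside $V_1$, but $v_j^2$ is adjacent in $V_1$ exactly to $\{v_i^1 : v_i \in N_G[v_j]\}$, and I would show that any such footprinting can be rerouted to a $V_1$-vertex, or else that such configurations cannot help extend a longest sequence. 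This lets me replace $S$ by a sequence $S''$ of the same (or greater) length in which $\widehat{S''} \cap V_2$ has size at most one, hence $|\widehat{S''} \cap V_1| \geq k-1$; after a further cleanup I would argue we can get $|\widehat{S''}\cap V_1| \ge k$ with the $V_1$-subsequence still legal. Then reading the $V_1$-subsequence as vertices of $G$ via $v_i^1 \mapsto v_i$ yields a closed neighborhood sequence in $G$ (again by the neighborhood description, footprinting inside $V_2$ pulls back to footprinting in $G$), and since $\widehat S$ dominates $V_2$, its $V_1$-part must dominate all of $V$; so $\gamma_{gr}(G)\geq k$.

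The main obstacle is precisely this converse direction: controlling the interaction of the two cliques $V_1$ and $V_2$ and showing that the $V_2$-vertices in an optimal sequence can be reduced to at most one (or otherwise accounted for) without shortening the sequence. Getting the bookkeeping exactly right — in particular ensuring that after the surgery the surviving $V_1$-subsequence is still a legal closed neighborhood sequence of length at least $k$ and still dominating in $G$ — is the delicate part; the forward direction and the neighborhood computation are routine. I would handle the reduction with an exchange argument analogous in spirit to the one used in Claim~\ref{GDN_claim1}, taking $S$ to be a longest dominating sequence and showing any "bad" occurrence of a $V_2$-vertex can be excised or moved while preserving length, then iterating.
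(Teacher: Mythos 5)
Your forward direction coincides with the paper's: map $(u_1,\ldots,u_t)$ to $(u_1^1,\ldots,u_t^1)$ and observe that $u_i$ footprinting $v$ in $G$ becomes $u_i^1$ footprinting $v^2$ in $G'$. Your converse also follows the paper's route (reduce to at most one $V_2$-vertex, then read off a sequence in $G$ from the $V_1$-part), but the one step you leave as ``after a further cleanup I would argue we can get $|\widehat{S''}\cap V_1|\ge k$'' is precisely where the proof lives, and it deserves to be made concrete. After normalizing (by the $V_1\leftrightarrow V_2$ symmetry of the construction) so that $w_1\in V_1$, the whole clique $V_1$ is dominated by $w_1$, so \emph{no} later vertex footprints anything in $V_1$; hence a vertex of $V_2$ can only footprint inside $V_2$, there is at most one such vertex, and since it completes the domination of $V_2$ it must be the \emph{last} vertex $w_t=v_i^2$ of the sequence. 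Now let $v_j^2$ be a vertex it footprints: since $v_j^2$ was undominated before step $t$, no $v_\ell^1$ with $v_\ell\in N_G[v_j]$ occurs earlier, and in particular $v_j^1\notin\widehat S$. Replacing $v_i^2$ by $v_j^1$ (which legally footprints $v_j^2$) yields a legal sequence of the same length $t\ge k$ lying entirely in $V_1$, which projects to a closed neighborhood sequence of length at least $k$ in $G$ and hence gives $\gamma_{gr}(G)\ge k$. Your alternative worry about a later $v_j^2$ ``footprinting inside $V_1$'' does not arise once $w_1\in V_1$ is fixed; without that explicit normalization your case analysis is incomplete. With this exchange spelled out, your argument matches the paper's proof.
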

\begin{proof}
First, let $S = (u_1, u_2, \ldots, u_t)$ be a dominating sequence of $G$ of length $t$, where $t \geq k$. Then $S' = (u_1^1, u_2^1, \ldots, u_t^1)$ is a dominating sequence of length at least $k$ in $G'$. Indeed, if $v_i$ is a vertex footprinted by $u_i$ with respect to $S$, then $v_i^2$ is footprinted by $u_i^1$ with respect to $S'$. 

\smallskip
Conversely, let $S = (w_1, w_2, \ldots, w_t)$ be a dominating sequence of length $t$ in $G'$, where $t \geq k$ and $w_i \in V_1 \cup V_2$ for all $i \in [t]$. Without loss of generality, we may assume that $w_1  \in V_1$. Note that there can be at most one vertex from $V_2$ in $S$. If there is no such vertex, then the sequence $S$ corresponds to a sequence of vertices in $G$ of length at least $k$, which is a dominating sequence of $G$. Now, suppose there exists a vertex from $V_2$ in $S$. Clearly, it has to be the last vertex of $S$, and let $w_t$ be vertex $v_i^2 \in V_2$. Note that $v_i^2$ appears in $S$ to footprint a vertex $v_j^2 \in V_2$ and $v_j^1 \notin \hat{S}$. We modify $S$ by replacing the vertex $v_i^2$ with the vertex $v_j^1$ and get a new sequence $S'$, which is again a dominating sequence of $G'$ of length at least $k$. Since $S'$ contains only vertices from $V_1$, it corresponds to a sequence of vertices in $G$ of length at least $k$, which is a dominating sequence of $G$. This completes the proof of the converse direction of the statement. 
\end{proof}

Now, we are ready to state the announced result.

\medskip
\begin{theorem}
The GDD problem is NP-complete for co-bipartite graphs.
\end{theorem}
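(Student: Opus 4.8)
\textit{Proof proposal.}
The plan is to invoke Claim~\ref{cl:cobip} together with the NP-completeness of the GDD problem on general graphs established in~\citep{breva}. First I would argue membership in NP: a dominating sequence of $G'$ of length at least $k$ is a certificate of size polynomial in $|V(G')|$, and one checks in polynomial time, by a single left-to-right scan maintaining the set of already dominated vertices, both that the sequence is a legal closed neighborhood sequence and that its underlying vertex set dominates $G'$.

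For NP-hardness I would reduce from GDD on general graphs. Given an instance $(G,k)$ of that problem, output the instance $(G',k)$ with the \emph{same} threshold $k$, where $G'=(V_1\cup V_2,E')$ is the co-bipartite graph built from $G$ exactly as described above. This map is computable in polynomial time: $G'$ has $2n$ vertices and $O(n^2)$ edges, and deciding each potential edge $v_i^1 v_j^2$ amounts to a single membership test $v_j\in N_G[v_i]$. That $G'$ is co-bipartite needs no separate argument beyond the construction, since $V_1$ and $V_2$ each induce a clique, so $\overline{G'}$ is bipartite with parts $V_1$ and $V_2$.

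Correctness of the reduction is precisely the content of Claim~\ref{cl:cobip}: $\gamma_{gr}(G)\ge k$ if and only if $\gamma_{gr}(G')\ge k$, i.e. $(G,k)$ is a yes-instance of general GDD if and only if $(G',k)$ is a yes-instance of GDD restricted to co-bipartite graphs. Hence GDD is NP-hard on co-bipartite graphs, and combined with membership in NP it is NP-complete there.

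No real obstacle remains at this point, since the one delicate step has already been carried out inside Claim~\ref{cl:cobip}: in the converse direction one must turn an arbitrary dominating sequence of $G'$ of length at least $k$ into one lying entirely in $V_1$, so that it projects to a dominating sequence of $G$ of the same length. The crux there is that at most one vertex of $V_2$ can occur in such a sequence — forced by $V_2$ being a clique — and, if it occurs, it is the last vertex and can be swapped for the $V_1$-copy of the $V_2$-vertex it footprints without changing the length. Once this is granted, the theorem follows immediately.
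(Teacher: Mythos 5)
Your proposal is correct and follows essentially the same route as the paper: the theorem is obtained by combining NP membership, the polynomial-time construction of the co-bipartite graph $G'$, and the equivalence $\gamma_{gr}(G)\ge k \Leftrightarrow \gamma_{gr}(G')\ge k$ from Claim~\ref{cl:cobip}, which is exactly how the paper derives it. Your summary of the delicate step inside the claim (at most one $V_2$-vertex, necessarily last, swappable for a $V_1$-copy) also matches the paper's argument.
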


\section{Efficient algorithm for chain graphs}
\label{sec:algo}

In this section, we give a linear-time algorithm to compute a GD-sequence of a chain graph. Before discussing the main idea for chain graphs, we first give the Grundy domination number of a complete bipartite graph which is a subclass of chain graphs. The proof of this result is \textcolor{black}{straightforward}.

\medskip
\begin{proposition}\label{ob:GD_complete}
If $G = (X, Y,E)$ is a complete bipartite graph, then $\gamma_{gr} (G) = max\{|X|, |Y|\}$.
\end{proposition}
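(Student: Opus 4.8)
The plan is to prove the two inequalities $\gamma_{gr}(G) \le \max\{|X|,|Y|\}$ and $\gamma_{gr}(G) \ge \max\{|X|,|Y|\}$ separately, where $G=(X,Y,E)$ is complete bipartite. Write $|X|=n_1$, $|Y|=n_2$, and assume without loss of generality $n_1 \ge n_2$, so the target value is $n_1$.

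For the lower bound, I would simply exhibit a dominating sequence of length $n_1$. Take $S=(x_1,x_2,\dots,x_{n_1})$, any ordering of $X$. Since $G$ is complete bipartite, $N[x_i] = \{x_i\} \cup Y$ for every $i$, so each $x_i$ footprints itself (it is not in $N[x_j]$ for $j<i$, as $X$ is independent), making $S$ a legal closed neighborhood sequence; and $\widehat{S}=X$ dominates $Y$ (complete bipartite) and dominates itself, so $\widehat{S}$ is a dominating set. Hence $\gamma_{gr}(G) \ge n_1 = \max\{|X|,|Y|\}$.

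For the upper bound, let $S=(v_1,\dots,v_k)$ be any dominating sequence. Each $v_i$ footprints at least one vertex, and the footprinted sets are pairwise disjoint, so $k$ is at most the number of vertices that can be ``charged'' in this way. The key observation is that once a vertex of $X$ has been chosen, all of $Y$ is dominated (each $x\in X$ has $N[x]\supseteq Y$), and symmetrically once a vertex of $Y$ has been chosen, all of $X$ is dominated. So if $S$ contains a vertex from $X$, say the first one is $v_p$, then no $v_i$ with $i>p$ can footprint a vertex of $Y$ (all of $Y$ was already dominated by $v_p$), and no $v_i$ with $i \ge p$ can footprint a vertex of $X$ other than itself only if... more carefully: after the first vertex of $X$ appears, every subsequently chosen vertex of $Y$ would footprint only vertices of $X$, and every subsequently chosen vertex of $X$ footprints only itself. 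I would split into cases according to whether $\widehat{S}\cap X = \emptyset$, $\widehat{S}\cap Y=\emptyset$, or both are nonempty; in the mixed case, let $v_p$ be the first vertex of $X$ and $v_q$ the first vertex of $Y$ with, say, $p<q$ — then $v_q$ and every later vertex footprint only vertices of $X$, while $v_1,\dots,v_{p-1}$ all lie in $Y$ and footprint only vertices of $Y \cup (\{$themselves$\})$, actually they footprint vertices of $Y$; a clean way is to count: vertices of $S$ lying in $Y$ footprint distinct vertices all within $\{v_p,\dots\}$'s complement, and each contributes one footprint; total footprints $\le |X|+|Y|$ naively is too weak, so the refined argument is that at most one ``transition'' occurs, giving $k \le \max\{|X|,|Y|\} $.

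The main obstacle is making the upper bound counting argument airtight: I expect the cleanest route is to show that in any dominating sequence of a complete bipartite graph, either all chosen vertices lie in $X$, or all lie in $Y$, or the sequence has the form (some vertices of one side) followed by (exactly one vertex of the other side) — because the single vertex from the ``second'' side already dominated that entire side before being chosen, so it can only footprint vertices on the first side, and then the whole side it lies on is dominated, ending the sequence after it could add at most the vertices needed to footprint the remaining unfootprinted vertices of the first side, which are at most $|\text{first side}|$ in total counting the initial run. Tallying this gives length at most $\max\{|X|,|Y|\}$. Combining the two bounds yields $\gamma_{gr}(G)=\max\{|X|,|Y|\}$. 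I would also remark that this is consistent with Proposition~\ref{pro:basic}, since all vertices of $X$ are closed twins and likewise for $Y$, forcing a rigid order within each side.
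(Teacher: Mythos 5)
The paper states Proposition~\ref{ob:GD_complete} without proof, calling it straightforward, so there is no official argument to compare against; your two-inequality plan is the natural one, and your lower bound (take all of the larger side in any order; each vertex footprints itself since the sides are independent, and the whole side is a dominating set) is complete as written. The upper bound is where you need to be careful, and your own tally does not quite close. In the mixed case, say the sequence begins with $q-1$ vertices of $X$ followed by the first vertex $v_q\in Y$: after $v_1$ all of $Y$ is dominated, so $v_2,\dots,v_{q-1}$ footprint only themselves, and $v_q$ footprints every not-yet-dominated vertex of $X$ at once (its closed neighborhood contains all of $X$), so the sequence ends at $v_q$ and has length exactly $q$. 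The naive count $q=(q-1)+1\le |X|+1$ overshoots by one; the missing observation is that $v_q$ must footprint \emph{something}, so $\{v_1,\dots,v_{q-1}\}$ cannot be all of $X$, i.e.\ $q-1\le |X|-1$ and hence $q\le |X|\le\max\{|X|,|Y|\}$. With that sentence added (and its mirror image when the sequence starts in $Y$), together with the easy all-$X$ and all-$Y$ cases, your argument is airtight. Your closing phrase about the sequence ``adding at most the vertices needed to footprint the remaining unfootprinted vertices of the first side'' after the transition vertex is also slightly off: no such further vertices can be added, because the transition vertex footprints all of them simultaneously and the sequence terminates there.
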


\medskip
{Recall that a chain graph $G=(X,Y,E)$ has a chain ordering $\alpha$, and based on the equivalence relation joining open twins, the sets $X$ and $Y$ partition into subsets $X_1, X_2, \ldots, X_k$ and $Y_1, Y_2, \ldots, Y_k$, respectively. (Also recall that $N(X_i) = \bigcup_{r=1}^{i} Y_r$ and $N(Y_j) = \bigcup_{r=j}^{k}X_r$ for each $i,j \in [k]$.) Since the case $k=1$ yields a complete bipartite graph, in the rest of this section we only consider the chain graphs with $k\ge 2$. We also assume that a chain graph $G=(X,Y,E)$ is given along with the chain ordering $\alpha$ and the partitions of $X$ and $Y$ into $k$ subsets. 

\smallskip
The proof of the following observation is again easy, and hence is omitted.

\smallskip
\begin{Ob}\label{basicproperty1}
\textcolor{black}{Let $A \subseteq V(G)$ be a set of open twins in an arbitrary graph $G$. Then there exists a GD-sequence $S$ of $G$ such that all vertices of $A\cap \hat{S}$ appear together in $S$.}
\end{Ob}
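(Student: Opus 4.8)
The plan is to show that any GD-sequence can be rearranged so that the vertices of $A \cap \hat{S}$ form a contiguous block, without changing its length or the property of being a dominating sequence. Let $S = (v_1, \ldots, v_m)$ be a GD-sequence of $G$, and let $a$ be the \emph{last} vertex of $A \cap \hat{S}$ in $S$, say $a = v_t$. I would construct $S'$ by removing every vertex of $A \cap \hat{S}$ from its current position and reinserting all of them, in their original relative order, immediately before $v_t$ (equivalently, at the position previously occupied by $v_t$). Since open twins share the same closed neighborhood, the key observation is that the footprinting behaviour is essentially insensitive to where within the sequence the twins from $A$ sit relative to one another.

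The core argument has two steps. First, I would verify that $S'$ is still a legal closed neighborhood sequence. The only moves whose legality could be affected are (i) the moved vertices of $A$, and (ii) the vertices lying strictly between the first element of $A\cap\hat S$ and $v_t$ that are \emph{not} in $A$. For a non-$A$ vertex $w = v_j$ with its footprinted vertex $u \in N[v_j] \setminus \bigcup_{i<j} N[v_i]$: if $u \notin N[a] = N[a']$ for $a' \in A$, then $u$ is still unfootprinted when $w$'s turn comes in $S'$ (the set of predecessors of $w$ in $S'$ that are not in $A$ is the same, and adding/removing $A$-vertices before $w$ does not cover $u$), so $w$ remains legal; the case $u \in N[a]$ requires a small additional argument, using that $u$ can only be footprinted once, so at most one such $w$ relies on such a $u$, and one checks this does not create a conflict — this is the delicate point. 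For the vertices of $A$ themselves: the first element $a_1$ of $A\cap\hat S$ footprints, in $S$, some vertex $u_1 \in N[a_1]$ not covered by earlier vertices; since all of $A\cap\hat S$ is moved as a block starting at the old position of $v_t$, and every vertex of $A$ has closed neighborhood $N[a_1]$, at least $|A \cap \hat S|$ distinct vertices of $N[a_1]$ must have been footprinted by the block in $S$ (indeed each $A$-vertex footprints a vertex of $N[a_1]$, and these are distinct), hence the same $|A\cap\hat S|$ vertices of $N[a_1]$ remain available to be footprinted, in turn, by the relocated block in $S'$ — so each moved $A$-vertex is legal.

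Second, $S'$ clearly has the same length and the same vertex set $\hat{S'} = \hat S$, so $\hat{S'}$ is still a dominating set; thus $S'$ is a GD-sequence with $A\cap\hat S$ appearing consecutively, as required.

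The main obstacle I expect is case (i) above: carefully handling non-$A$ vertices $w$ between the old first-$A$-position and $v_t$ whose footprinted vertex lies in $N[a]$. The cleanest way around it is probably to argue more globally: instead of tracking individual footprinters, observe that reordering within an open-twin class is a special case of the fact (essentially Proposition~\ref{pro:basic} and its proof technique) that only the \emph{multiset of closed neighborhoods appearing so far} governs legality, and that the block of $A$-vertices contributes the single closed neighborhood $N[a_1]$ with multiplicity equal to its size regardless of interleaving. Formalizing ``moving a twin block does not destroy legality'' via a counting argument on $|N[a_1] \cap \{\text{footprinted vertices}\}|$ is the crux; everything else is bookkeeping.
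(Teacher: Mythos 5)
The paper states this observation without proof (``the proof \ldots is omitted''), so there is nothing to compare against; judged on its own, your argument has a genuine error at its core. You assert that open twins share the same closed neighborhood and repeatedly use $N[a]=N[a']$ for distinct $a,a'\in A$. By the paper's definition, open twins satisfy $N(a)=N(a')$, not $N[a]=N[a']$: open twins are non-adjacent, so their closed neighborhoods differ exactly in their private vertices, $N[a_i]\setminus N[a_j]=\{a_i\}$. This is not cosmetic. If your premise were true, a second vertex of $A$ appearing in a legal sequence would have nothing left to footprint, so $|A\cap\hat{S}|\le 1$ always and the observation would be vacuous; the whole content of the statement comes from the fact that each later twin $a_i$ can footprint \emph{only} its private vertex $a_i$. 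Consequently your counting argument for the legality of the relocated block (``each $A$-vertex footprints a vertex of $N[a_1]$, and these are distinct, hence the same vertices of $N[a_1]$ remain available'') is false -- for $i\ge 2$ the vertex footprinted by $a_i$ is $a_i\notin N[a_1]$ -- and the assertion that the footprinted vertices ``remain available'' after the block is pushed past the intermediate vertices is precisely what needs proof and is never established. Your closing ``global'' reformulation (the block contributes a single closed neighborhood with multiplicity) fails for the same reason.

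The relocation strategy itself is right and the repair is short. Assume $r=|A\cap\hat{S}|\ge 2$ and let $a_1,\dots,a_r$ be these vertices in order of appearance. Since $N[a_r]\setminus N[a_1]=\{a_r\}$, the vertex $a_r$ footprints only itself, which forces every predecessor of $a_r$ in $S$ to lie outside $N[a_r]=N(a_1)\cup\{a_r\}$; hence no vertex of the common neighborhood $N(a_1)$ appears anywhere before $a_r$, and every $a_i$ footprints itself. Now move $a_1,\dots,a_{r-1}$ to just before $a_r$. The non-$A$ vertices that sat between $a_1$ and $a_r$ only \emph{lose} predecessors, so their legality is automatic -- the ``delicate point'' you flag in case (i) is a non-issue in the direction you chose. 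The step that actually needs care is the one your counting argument was aimed at: the relocated $a_i$ \emph{gain} those intermediate vertices as predecessors. But each such intermediate vertex lies outside $N(a_1)\cup A$ and hence outside $N[a_i]$, so $a_i$ is still undominated when its turn comes and again footprints itself. The length and the vertex set of the sequence are unchanged, so the result is a GD-sequence with $A\cap\hat{S}$ consecutive.
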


\smallskip 
\begin{Ob}\label{basicproperty2}
\textcolor{black}{Let $S$ be a GD-sequence of an arbitrary graph $G$ and $A \subseteq V(G)$ be a set of open twins in $G$ such that $A \cap \hat{S} \neq \emptyset$. If the first vertex of $A$ in $S$ footprints itself, then there exists a GD-sequence of $G$ in which all vertices of $A$ appear and they appear together in that sequence.}
\end{Ob}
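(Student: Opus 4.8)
Write $S=(v_1,\dots,v_n)$, let $a=v_\ell$ be the first vertex of $A$ occurring in $S$, let $F_j$ denote the set of vertices footprinted by $v_j$, and recall that open twins are pairwise non-adjacent with $N[a']=N(a)\cup\{a'\}$ for every $a'\in A$, and that by hypothesis $a$ footprints itself, i.e.\ $v_\ell\notin\bigcup_{j<\ell}N[v_j]$. If $A\setminus\hat S=\emptyset$ we are done by Observation~\ref{basicproperty1}, so assume $A_0:=A\setminus\hat S\neq\emptyset$. The argument rests on three facts. \emph{(i)} Since $v_\ell\notin N[v_j]$ for every $j<\ell$, no $v_j$ with $j<\ell$ lies in $N[a]=N(a)\cup\{a\}$; as no such $v_j$ is itself an $A$-vertex, no vertex of $A$ is dominated before step $\ell$, and the first vertex of $S$ lying in $N(a)$ — call it $v^*$, which exists because $\hat S$ dominates the nonempty set $A_0$ — occurs after $a$. \emph{(ii)} Playing $a$ dominates all of $N[a]\supseteq N(a)$, hence no vertex occurring after $a$ footprints anything in $N(a)$; in particular every vertex of $A\cap\hat S$ other than $a$ footprints only itself. \emph{(iii)} For a missing twin $a'\in A_0$, the only elements of $N[a']=N(a)\cup\{a'\}$ that can lie in $\hat S$ are those in $N(a)$, so the footprinter of $a'$ is exactly $v^*$; thus $v^*$ footprints every vertex of $A_0$.

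Next I would move the vertices $a_2,\dots,a_p$ of $A\cap\hat S=\{a=a_1,a_2,\dots,a_p\}$ to the positions immediately following $a$; as in Observation~\ref{basicproperty1} this yields a GD-sequence, and since the prefix up to and including $a$ is untouched, $a$ still footprints itself. Call the resulting consecutive block $B$; by \emph{(iii)}, $v^*$ lies after $B$. Now insert the vertices of $A_0$ immediately after $B$. By \emph{(i)} each inserted vertex is undominated at its turn, so it footprints itself and the insertion is legal. For any vertex $v_j$ occurring after $a$, its footprint becomes $F_j\setminus(N(a)\cup A_0)$, which equals $F_j\setminus A_0$ by \emph{(ii)}, and by \emph{(iii)} this is still $F_j\neq\emptyset$ unless $v_j=v^*$. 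Hence $v^*$ is the only vertex whose legality the insertion can destroy, and its new footprint is $F_{v^*}\setminus A_0$.

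Finally I would split on $F_{v^*}$. If $F_{v^*}\supsetneq A_0$, then $v^*$ survives the insertion, the modified sequence is a legal dominating sequence (domination only improves), and it is strictly longer than $S$, contradicting that $S$ is a GD-sequence. Therefore $F_{v^*}=A_0$; then $v^*$ footprints nothing after the insertion and may be deleted, which preserves domination because the only vertices covered by $v^*$ alone were those of $A_0$, now covered by the inserted twins. The resulting sequence $\bar S$ is a legal dominating sequence of length $n-1+|A_0|$, so maximality of $S$ forces $|A_0|\le 1$, i.e.\ $|A_0|=1$; then $\bar S$ has length $n=\gamma_{gr}(G)$, hence is a GD-sequence, and it contains all of $A$ with $B$ immediately followed by the unique missing twin, so the vertices of $A$ appear together. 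The step I expect to cost the most care is the bookkeeping in the last two paragraphs — verifying that the insertion, and then the deletion of $v^*$, leaves every remaining vertex legal and the vertex set dominating — which is exactly where facts \emph{(ii)} and \emph{(iii)} and the uniqueness of the footprinter $v^*$ are used, and where one must invoke the maximality of $S$ at precisely the two indicated moments.
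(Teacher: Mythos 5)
Your proof is correct, and at its core it performs the same exchange as the paper's argument --- trade the footprinter of a missing twin for the missing twin itself --- but your version is substantially more careful, and the extra care addresses real gaps. The paper's proof is essentially one line: take $a\in A\setminus\hat S$, let $b$ be its footprinter, note that $b$ appears after all of $A\cap\hat S$, replace $b$ with $a$, and repeat. As written this does not explain why deleting $b$ preserves domination and the legality of later vertices when $b$ footprints vertices outside $A$, nor what happens on the second iteration once that footprinter is already gone. Your facts \emph{(i)}--\emph{(iii)} supply exactly what is missing: all vertices of $A\setminus\hat S$ share one common footprinter $v^*$, and maximality of $S$ is then invoked twice, first to force $v^*$ to footprint nothing but the missing twins (otherwise inserting them after the block gives a strictly longer dominating sequence) and then to force $|A\setminus\hat S|=1$ (otherwise deleting $v^*$ after the insertion again gives a longer sequence). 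So your argument not only proves the observation but also explains why the paper's terse swap is legitimate; the two uses of maximality and the uniqueness of $v^*$ are precisely the points the paper leaves implicit. I found no gap in your reasoning.
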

\begin{proof}
\textcolor{black}{If $A \cap \hat{S} = A$, we have nothing to prove. So, suppose that $A \cap \hat{S} \neq A$. Then, there exists a vertex $a \in A$, which is not in $S$. Thus there exists a vertex $b \in \hat{S}$, which footprints $a$. Note that $b$ appears after all vertices of $A \cap \hat{S}$. Now, we modify $S$ by replacing $b$ with the vertex $a$. By doing this repeatedly, we get a new GD-sequence $S'$ of $G$ which contains all vertices of $A$. We can rearrange all vertices of $A$ so that all vertices of $A$ appear together in $S'$.}
\end{proof}

In the remainder of the section we assume that $G$ is a chain graph with the partition of its vertex set as described earlier.
\smallskip
\begin{Ob}\label{basicproperty3}
\textcolor{black}{There exists a GD-sequence $S$ of $G$ such that for every $i \in [k]$, we have $X_i \cap \hat{S} \neq \emptyset$ or $Y_i \cap \hat{S} \neq \emptyset$}.
\end{Ob}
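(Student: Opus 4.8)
The plan is an exchange argument driven by a potential function. For a GD-sequence $S$ of $G$, let $\Phi(S)$ be the number of indices $i\in[k]$ with $X_i\cap\hat S\neq\emptyset$ or $Y_i\cap\hat S\neq\emptyset$. Starting from an arbitrary GD-sequence, I would show that if $\Phi(S)<k$ then there is a GD-sequence $S'$ with $\Phi(S')=\Phi(S)+1$; since $\Phi$ is bounded by $k$, iterating at most $k$ times produces a GD-sequence meeting every level, which is the assertion.

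So assume some level $i$ is missed: $X_i\cap\hat S=Y_i\cap\hat S=\emptyset$. As $\hat S$ dominates $X_i$ and, by Observation~\ref{chainpartition}, $N(X_i)=\bigcup_{r=1}^{i}Y_r$, there is a vertex of $\hat S$ in $Y_1\cup\cdots\cup Y_{i-1}$; let $y^*\in Y_p$ ($p\le i-1$) be the first one occurring in $S$. For every $x\in X_i$ we have $N[x]\cap\hat S\subseteq Y_1\cup\cdots\cup Y_{i-1}$ (since $x\notin\hat S$, $Y_i\cap\hat S=\emptyset$, and $N(x)\subseteq\bigcup_{r=1}^{i}Y_r$), and $y^*$ is the first such vertex in $S$ and is adjacent to all of $X_i$; hence $y^*$ footprints all of $X_i$. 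Dually, using $N(Y_i)=\bigcup_{r=i}^{k}X_r$, there is a first vertex $x^*\in X_q$ ($q\ge i+1$) of $\hat S$ among $X_{i+1}\cup\cdots\cup X_k$, and it footprints all of $Y_i$. Exactly one of $y^*,x^*$ precedes the other in $S$; I would treat the case that $y^*$ precedes $x^*$, the case $x^*$ precedes $y^*$ being symmetric under the relabeling that exchanges $X$ with $Y$ and reverses the order of levels (note $N(X_i)=\bigcup_{r\le i}Y_r$ and $N(Y_i)=\bigcup_{r\ge i}X_r$ are dual).

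Since $y^*\in Y_p$ with $p<i$ occurs before $x^*$, all of $X_i\cup\cdots\cup X_k$ (hence $x^*$) is dominated by the time $x^*$ appears, so the footprint $F$ of $x^*$ satisfies $Y_i\subseteq F\subseteq Y_1\cup\cdots\cup Y_q$. Fix $y_i\in Y_i$ (recall $y_i\notin\hat S$). Inserting $y_i$ into $S$ just before $x^*$ is legal: nothing before $x^*$ dominates $y_i$, because $\hat S$ meets $N[y_i]=\{y_i\}\cup X_i\cup\cdots\cup X_k$ only inside $X_{i+1}\cup\cdots\cup X_k$, whose first member there is $x^*$; so $y_i$ footprints itself, $x^*$ still footprints $F\setminus\{y_i\}$, and every later vertex keeps a nonempty footprint since all of $N[y_i]$ is already dominated by then. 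If $F\neq\{y_i\}$ this is a legal dominating sequence longer than $S$, contradicting that $S$ is a GD-sequence; hence $F=Y_i=\{y_i\}$ is a single vertex. In this tight case I would instead replace $x^*$ by $y_i$ in $S$, obtaining $S'$. Then $S'$ is a legal dominating sequence: the only vertex that $\hat S$ dominates exclusively through $x^*$ is $y_i$ (any vertex uniquely dominated by $x^*$ is footprinted by it), and $y_i$ redominates it; and for each vertex after $x^*$ the set of already-dominated vertices does not grow, because $N[y_i]$ is contained in $N[x^*]$ together with what is dominated before $x^*$. Thus $S'$ is a GD-sequence with $y_i\in Y_i\cap\hat{S'}$, so level $i$ is now met. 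Moreover $S'$ does not un-meet any other level: the only level whose membership changed besides $i$ is $q$, and if $X_q\cap\hat{S'}=Y_q\cap\hat{S'}=\emptyset$ then $Y_q\subseteq N[x^*]$, which is dominated before $x^*$ (as $x^*$ footprints only $y_i$), would be dominated by some vertex of $\hat S\cap(X_{q+1}\cup\cdots\cup X_k)$ occurring before $x^*$, contradicting that $x^*$ is the first vertex of $\hat S$ among $X_{i+1}\cup\cdots\cup X_k$. Hence $\Phi(S')=\Phi(S)+1$, and iterating finishes the proof.

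The main obstacle is exactly the tight case, i.e.\ when $S$ admits no lengthening by inserting a vertex of $X_i\cup Y_i$: one must deduce that then $X_i$ or $Y_i$ is a single vertex, so that a one-for-one swap is available, and then verify both that the swap preserves legality of the tail of the sequence and — most delicately — that it does not destroy the covering of the other affected level, which is precisely where the first-occurrence choice of $x^*$ (respectively $y^*$) is used. The bookkeeping in the case $x^*$ precedes $y^*$ is the mirror image of the above under the $X\leftrightarrow Y$, level-reversal symmetry.
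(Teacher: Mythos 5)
Your proof is correct and follows essentially the same exchange argument as the paper: if level $i$ is missed, the vertices of $X_i$ and $Y_i$ must be footprinted from $\bigcup_{r<i}Y_r$ and $\bigcup_{r>i}X_r$ respectively, and one replaces the later of the two cross-footprinters by a vertex (or all vertices) of the missing part, using maximality of $S$ to control the length. Your write-up is in fact more careful than the paper's on two points the paper only gestures at --- that the tight case forces $|Y_i|=1$ (resp.\ $|X_i|=1$), and that the swap does not spoil the level of the removed footprinter, which you secure via the first-occurrence choice of $x^*$ --- but the underlying route is the same.
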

\begin{proof}
\textcolor{black}{Let $S$ be a GD-sequence such that} $X_i \cap \hat{S} = \emptyset$ and $Y_i \cap \hat{S} = \emptyset$. This implies that there exist vertices $x \in \cup_{r=i+1}^{k}X_r$ and $y \in \cup_{r=1}^{i-1}Y_r$ in $S$ to footprint the vertices of $Y_i$ and $X_i$ respectively. Note that $i \notin \{1,k\}$. Now, if $x$ appears before $y$ in $S$, then modifying the sequence $S$ by replacing $y$ by all vertices of $X_i$ gives another GD-sequence of $G$, which includes some vertices from $X_i$. (Note that if $y \in Y_j$, then $X_j \cap \hat{S} \neq \emptyset$. To see this, assuming that $X_j \cap \hat{S} = \emptyset$ implies that $y$ footprints vertices of both $X_j$ and $X_i$ in $S$. In this case, replacing the vertex $y$ with all vertices of $X_i$ and $X_j$ results in a new dominating sequence of $G$ of length bigger than $S$, a contradiction.) Similarly, if $y$ appears before $x$ in $S$, then modifying the sequence $S$ by replacing $x$ by all vertices of $Y_i$ gives another GD-sequence of $G$, which includes some vertices from $Y_i$. Hence, there exists a GD-sequence $S$ of $G$ such that $X_i \cap \hat{S} \neq \emptyset$ or $Y_i \cap \hat{S} \neq \emptyset$.
\end{proof}

\smallskip
\textcolor{black}{Let $A$ be a set of open twins in $G$. If $|A \cap \hat{S}| \geq 2$, then we see that the first vertex of $A$ in $S$ footprints itself. Thus we can assume that, $A \subseteq \hat{S}$, due to Observation~\ref{basicproperty2}. Hence, we have, $|A \cap \hat{S}| \leq 1$ or $A \subseteq \hat{S}$ for any set of open twins $A$ in $G$. Note that the each of the sets $X_1, X_2, \ldots, X_k, Y_1, Y_2, \ldots, Y_k$ is a set of open twins in $G$.}

\smallskip
\textcolor{black}{Now, based on the Observations~\ref{basicproperty1}, \ref{basicproperty2} and~\ref{basicproperty3}, whenever we consider a GD-sequence $S$ of $G$, we assume that $S$ satisfies the following, for the rest of this section:\\
(1) For each $i \in [k], |X_i \cap \hat{S}| \leq 1$ or $X_i \subseteq \hat{S}$. If $X_i \subseteq \hat{S}$, then all vertices of $X_i$ appear together in $S$.\\
(2) For each $i \in [k], |Y_i \cap \hat{S}| \leq 1$ or $Y_i \subseteq \hat{S}$. If $Y_i \subseteq \hat{S}$, then all vertices of $Y_i$ appear together in $S$.\\
(3) For each $i \in [k],~ X_i \cap \hat{S} \neq \emptyset$ or $Y_i \cap \hat{S} \neq \emptyset$.}

\vspace{2 mm}

{Now, let $S$ be a GD-sequence of $G$. Then $S$ is one of the following type}:
\begin{center}$(a)$ $X \cap \hat{S} = \emptyset$, \hskip 1cm $(b)$ $Y \cap \hat{S} = \emptyset$, \hskip 1cm $(c)$ $X \cap \hat{S} \neq \emptyset$ and $Y \cap \hat{S} \neq \emptyset$. 
\end{center}
We call the corresponding GD-sequences $S$ to be of \emph{type~(a)}, {\em type~(b)}, and \emph{type~(c)}, respectively. 

\smallskip
\begin{lemma}\label{lemma:chain1}
Let $S^* = (v_1, v_2, \ldots, v_p)$ be a GD-sequence of $G$ of type (c). The following statements hold:
\begin{enumerate}[(1)]
\item If $v_1 \in Y_1$, then there exists a type (a) GD-sequence of $G$.
\item If $v_1 \notin Y_1$, then there exists a GD-sequence $S$ of $G$ such that $\cup_{r=1}^{i} X_r \subseteq \hat{S}$ for some $i \in [k]$.
\end{enumerate}
\end{lemma}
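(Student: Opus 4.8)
My plan is to treat the two parts by different means: part~(1) by a short counting argument, part~(2) by exchange/rearrangement of $S^*$.

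\textbf{Part (1).} Suppose $v_1\in Y_1$. By Observation~\ref{chainpartition} we have $N(Y_1)=\bigcup_{r=1}^{k}X_r=X$, so $v_1$ footprints $N[v_1]=\{v_1\}\cup X$ at the first step; consequently every later vertex of $S^*$ footprints a vertex of $Y\setminus\{v_1\}$, and since those footprints are pairwise disjoint and nonempty we get $p\le|Y|$. Conversely, the sequence $T=(Y_1)\oplus(Y_2)\oplus\cdots\oplus(Y_k)$ --- indeed any ordering of $Y$ --- is legal: a vertex $y\in Y_s$ is distinct from, and non-adjacent to, every vertex preceding it (all of which lie in the independent set $Y$), so it footprints at least itself; moreover $\widehat{T}=Y$ dominates $G$, because each vertex of $X$ is adjacent to every vertex of $Y_1\subseteq\widehat{T}$. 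Hence $\gamma_{gr}(G)\ge|Y|$, and together with $p=\gamma_{gr}(G)\le|Y|$ this forces $p=|Y|$, so $T$ is a GD-sequence of type~(a), as required.

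\textbf{Part (2).} Now $v_1\notin Y_1$ means $v_1\in X_j$ for some $j\in[k]$, or $v_1\in Y_\ell$ for some $\ell\ge 2$. The plan is to modify $S^*$ into a GD-sequence $S$ with $X_1\subseteq\widehat{S}$ (so that $i=1$ works; a larger $i$ if this particular reduction should fail). Here I use the elementary fact that for any $m$ the prefix block $(X_1)\oplus(X_2)\oplus\cdots\oplus(X_m)$ is legal and footprints exactly $\bigcup_{r=1}^{m}X_r\cup\bigcup_{r=1}^{m}Y_r$ (this follows from $N(X_r)=\bigcup_{s\le r}Y_s$ in Observation~\ref{chainpartition}). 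If $X_1\subseteq\widehat{S^*}$ there is nothing to do. Otherwise, invoking the standing assumptions (1)--(3) on $S^*$: either $X_1\cap\widehat{S^*}$ consists of a single vertex $x_1$, and I insert the other vertices of $X_1$ immediately after $x_1$; or $X_1\cap\widehat{S^*}=\emptyset$ and some vertex of $Y_1$ lies in $\widehat{S^*}$, and I prepend the block $(X_1)$ right after $v_1$. In each case the inserted vertices of $X_1$ footprint at least themselves, because $Y_1$ has not yet been dominated at the point of insertion --- and this is exactly where $v_1\notin Y_1$ enters, since a first vertex in $Y_1$ would already have dominated $Y_1$ together with all of $X$. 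After inserting, I delete from the remainder of the sequence every vertex that no longer footprints anything; the result $S$ is still a dominating sequence (each vertex formerly dominated only by a deleted vertex is now dominated by the $X_1$-block or by a retained vertex), it contains $X_1$, and it has length at least $p$. The case $v_1\in Y_\ell$, $\ell\ge2$, is handled along the same lines: $v_1$ footprints $\{v_1\}\cup\bigcup_{r\ge\ell}X_r$, so $X_1$ and $Y_1$ are still untouched, and the same insert-and-prune procedure applies.

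\textbf{Main obstacle.} The heart of part~(2) is showing that the pruning step removes no more vertices than the insertion step adds, so that $|\widehat{S}|\ge p=\gamma_{gr}(G)$ and $S$ is therefore a GD-sequence. One charges each removed vertex to a distinct newly inserted vertex of $X_1$ through the vertex it originally footprinted: after prepending the $X_1$-block every newly dominated vertex lies in $X_1\cup Y_1$, while $v_1\notin Y_1$ guarantees that before the modification $X_1$ (and in the case $v_1\in X_j$ also $Y_1$) was not already footprinted "for free" by $v_1$; in the sub-case $X_1\cap\widehat{S^*}=\emptyset$, where some vertices of $Y_1$ already sit in $\widehat{S^*}$, one needs the extra observation that those $Y_1$-vertices which stop footprinting anything can themselves be matched off, so the count still balances. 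Making this charging precise, separately for $v_1\in X_j$ and for $v_1\in Y_\ell$ ($\ell\ge 2$), is the technical core; the remaining checks that the constructed sequences are legal and dominating are routine.
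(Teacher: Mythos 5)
Your Part (1) is correct and is essentially the paper's argument: $v_1\in Y_1$ footprints all of $X$, so every later vertex footprints only inside $Y\setminus\{v_1\}$, giving $p\le|Y|$, and an ordering of all of $Y$ is a legal dominating sequence of that length.

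Part (2) has a genuine gap. You aim to force $X_1\subseteq\widehat{S}$ by inserting the missing vertices of $X_1$ at a fixed spot (right after the unique $x_1\in X_1\cap\widehat{S^*}$, or right after $v_1$), and you justify that the inserted vertices footprint themselves ``because $Y_1$ has not yet been dominated at the point of insertion,'' attributing this to $v_1\notin Y_1$. But $v_1\notin Y_1$ only constrains the \emph{first} vertex; it does not prevent a vertex $y\in Y_1$ from occurring in $S^*$ after $v_1$ and before your insertion point. This happens precisely when $v_1\in Y_\ell$ with $\ell\ge2$: then $v_1$ footprints only $\{v_1\}\cup\bigcup_{r\ge\ell}X_r$, a vertex $y\in Y_1$ may legally appear next (footprinting itself and every still-undominated vertex of $X$, including all of $X_1$), and $x_1$ may appear later footprinting $Y_1\setminus\{y\}$. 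At your insertion point each vertex of $X_1$ is already dominated by $y$ and all of $N(X_1)=Y_1$ is already footprinted, so the inserted vertices footprint nothing. The only repair is to also delete $y$, but $y$ may be the unique footprinter of vertices in $X_2,\dots,X_{\ell-1}$, so deleting it can destroy domination; your charging scheme (one deleted vertex per inserted vertex, matched through the vertex it originally footprinted) does not account for this, and this is exactly the ``technical core'' you defer. (Also note the condition you need is ``no vertex of $Y_1$ has yet appeared,'' not ``$Y_1$ is not yet dominated'': a vertex of $Y_1$ dominates $\{y\}\cup X$, not the rest of the independent set $Y_1$.) The paper sidesteps this by not insisting on $i=1$: in the case $v_1\in Y_\ell$ it first gathers $\bigcup_{r\ge\ell}Y_r$ just after $v_1$, locates the first $X$-vertex $v_t$ of the sequence together with the frontier index $r'$ of what has been footprinted before $v_t$, and inserts the whole prefix $\bigcup_{r\le r'-1}X_r$ (or $\bigcup_{r\le t'}X_r$) after $v_t$, where self-footprinting of the inserted block can actually be certified. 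Your parenthetical ``(a larger $i$ if this particular reduction should fail)'' is precisely the content that is missing.
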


\begin{proof}
First, we assume that $v_1 \in Y_1$. In this case, all vertices of $X$ are footprinted by $v_1$. So, all the vertices $v_2, \ldots, v_p$ appear to footprint vertices of $Y \setminus \{v_1\}$ only. This implies that $\gamma_{gr}(G) \leq |Y|$. So, the sequence $S = (y_{n_2}, y_{n_2 - 1}, \ldots, y_{1})$ is also a GD-sequence of $G$ and it is of type (a).
Next, we assume that $v_1 \notin Y_1$. Since $S^*$ contains vertices from both $X$ and $Y$, we have two cases to consider.

\smallskip
\noindent
\textbf{Case 1:} \underline{$ v_1 \in X$}.\\
Let $v_1 \in X_i ~(i \in [k])$. So, we get that $v_1$ footprints all vertices of $N(X_i) \cup \{v_1\}$. Now, let $u$ be a vertex of $X$ such that $N(u) \subseteq N(v_1)$. If $u$ is footprinted by some vertex from $N(u)$, we modify the sequence $S^*$ as follows. We remove the footprinter of $u$ from $S^*$ and include $u$ just after $v_1$ and get a new sequence. But, if $u$ is footprinted by itself, then we relocate $u$ in $S$ by putting it just after $v_1$. We repeat the respective modifications for each vertex $u$ such that $N(u) \subseteq N(v_1)$ and get a new sequence $S$ which remains a GD-sequence of $G$. We again modify the ordering of vertices in $S$, so that it satisfies all the properties of Observations~\ref{basicproperty1}, \ref{basicproperty2} and~\ref{basicproperty3}, if required. Hence, we see that $\cup_{r=1}^{i} X_r \subseteq \hat{S}$ for some $i \in [k]$.

\medskip
\noindent
\textbf{Case 2:} \underline{$ v_1 \in Y$}.\\
Let $v_1 \in Y_i ~(i \in [k])$. Note that $i > 1$. Here, we may assume that all vertices $u$ from $Y$, where $N(u) \supseteq N(v_1)$, are in $S^*$ and all vertices of $\cup_{r=i}^{k} Y_r \setminus \{v_1\}$ appear together just after $v_1$. This is ensured because we can do modifications similar to the case 1, if it is not true. We rename vertices of $S^*$ again by $(v_1, v_2, \ldots, v_p)$, if necessary. Now, let $v_t$ be the vertex with the smallest index in the ordering $(v_2, v_3, \ldots, v_p)$ such that $v_t \in X$. Suppose that $v_t \in X_{t'}$. Note that there exists an integer $r' \in \{2, 3, \ldots, k\}$ such that all vertices of $\cup_{r=r'}^{k} (X_r \cup Y_r)$ are footprinted before the vertex $v_t$ {appears in $S$}, and all remaining vertices of $G$ are not footprinted.
If $v_t \in \cup_{r=1}^{r'-1} X_r$, we get that $v_t$ footprints all vertices of $N(v_t) \cup \{v_t\}$. In this case, we put all vertices $u$ of $X$ such that $N(u) \subseteq N(v_t)$ just after $v_t$ in any order, remove vertices from $N(u) \cap \widehat{S^*}$ which were appearing to footprint the vertex $u$ and rearrange all vertices so that the new sequence $S$ satisfies all the properties of Observation~\ref{basicproperty1}. Thus, we get that $\cup_{r=1}^{t'} X_r \subseteq \hat{S}$.
But, if $v_t \in \cup_{r=r'}^{k} X_r$, then $v_t$ footprints
all vertices of $\cup_{r=1}^{r'-1} Y_r$. So, we may assume that $v_t \in X_{r'}$. Again, we put all vertices $u \in \cup_{r=1}^{r'-1}X_r$ just after $v_t$ in any order, remove vertices from $N(u) \cap \widehat{S^*}$ which were appearing to footprint the vertex $u$ and rearrange all vertices so that the new sequence $S$ satisfies all the properties of Observation~\ref{basicproperty1}. Thus, we get that $\cup_{r=1}^{r'-1} X_r \subseteq \hat{S}$.

Therefore, there exists a GD-sequence $S$ of $G$ such that $\cup_{r=1}^{i} X_r \subseteq \hat{S}$ for some $i \in [k]$.
\end{proof}

\smallskip
Analogous to Lemma~\ref{lemma:chain1}, we give a symmetric lemma for the set $Y$ of $G$, whose proof follows similar lines and is omitted. 

\medskip
\begin{lemma}\label{lemma:chain2}
Let $S^* = (v_1, v_2, \ldots, v_p)$ be a GD-sequence of $G$ of type (c). The following statements hold:
\begin{enumerate}[(1)]
\item If $v_1 \in X_k$, then there exists a type (b) GD-sequence of $G$.
\item If $v_1 \notin X_k$, then there exists a GD-sequence $S$ of $G$ such that $\cup_{r=j}^{k} Y_r \subseteq \hat{S}$ for some $j \in [k]$.
\end{enumerate}
\end{lemma}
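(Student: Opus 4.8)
The plan is to mirror the argument of Lemma~\ref{lemma:chain1} exactly, using the left–right symmetry of a chain graph. Recall that reversing the chain ordering $\alpha$ interchanges the roles of $X$ and $Y$: the blocks $X_1,\dots,X_k$ become $Y_k,\dots,Y_1$ and vice versa, while the adjacency relation $N(X_i)=\bigcup_{r\le i}Y_r$, $N(Y_j)=\bigcup_{r\ge j}X_r$ is preserved under this relabelling. Under this symmetry, ``$v_1\in Y_1$'' becomes ``$v_1\in X_k$'', ``type (a)'' (i.e.\ $X\cap\hat S=\emptyset$) becomes ``type (b)'' (i.e.\ $Y\cap\hat S=\emptyset$), and ``$\bigcup_{r=1}^i X_r\subseteq\hat S$'' becomes ``$\bigcup_{r=j}^k Y_r\subseteq\hat S$''. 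So the first step is to observe that Lemma~\ref{lemma:chain2} is literally Lemma~\ref{lemma:chain1} applied to the chain graph $G$ equipped with the reversed chain ordering, and that the normalization conventions (1)–(3) imposed on GD-sequences are themselves symmetric under this reversal (each of $X_1,\dots,X_k,Y_1,\dots,Y_k$ is a set of open twins regardless of orientation, and Observations~\ref{basicproperty1}, \ref{basicproperty2}, \ref{basicproperty3} are stated for arbitrary graphs or are symmetric in $X$ and $Y$).

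If one prefers to spell the argument out directly rather than invoke symmetry, I would proceed as follows. For part (1), assume $v_1\in X_k$. Since $N(X_k)=\bigcup_{r=1}^k Y_r=Y$, the vertex $v_1$ footprints all of $Y$ together with itself, so every later vertex $v_2,\dots,v_p$ footprints only vertices of $X\setminus\{v_1\}$; hence $\gamma_{gr}(G)=p\le|X|$, and therefore the sequence $(x_1,x_2,\dots,x_{n_1})$ listing $X$ in chain order is itself a GD-sequence of $G$ of type (b). For part (2), assume $v_1\notin X_k$ and split into the cases $v_1\in Y$ and $v_1\in X$. If $v_1\in Y_j$ with $j\le k$ (so $v_1$ footprints $N(Y_j)\cup\{v_1\}=\bigcup_{r=j}^k X_r\cup\{v_1\}$, noting $j$ need not exceed $1$ but could), then for every $u\in Y$ with $N(u)\subseteq N(v_1)$ we relocate $u$ (or swap out the footprinter of $u$ and insert $u$) just after $v_1$, exactly as in Case~1 of Lemma~\ref{lemma:chain1}, obtaining a GD-sequence $S$ with $\bigcup_{r=j}^k Y_r\subseteq\hat S$ after re-normalizing by Observations~\ref{basicproperty1}–\ref{basicproperty3}. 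If instead $v_1\in X_i$ with $i<k$, we run the analogue of Case~2: let $v_t$ be the first vertex of $Y$ appearing after $v_1$, determine the threshold $r'$ such that all of $\bigcup_{r<r'}(X_r\cup Y_r)$ is footprinted before $v_t$ appears, and according to whether $v_t$ lies in $\bigcup_{r\ge r'}Y_r$ or in $\bigcup_{r<r'}Y_r$, push all $Y$-vertices $u$ with $N(u)\supseteq N(v_t)$ (respectively all of $\bigcup_{r=j'+1}^k Y_r$ for the appropriate $j'$) to just after $v_t$, delete the corresponding footprinters, and re-normalize; this yields a GD-sequence $S$ with $\bigcup_{r=j}^k Y_r\subseteq\hat S$ for a suitable $j\in[k]$.

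I do not expect any genuine obstacle here: the content is entirely a dualization of Lemma~\ref{lemma:chain1}, and the only care needed is bookkeeping — making sure that under the reversal of $\alpha$ the inclusion $N(u)\subseteq N(v_1)$ on the $X$-side turns into $N(u)\supseteq N(v_1)$ on the $Y$-side, and that the index shifts (``$\bigcup_{r=1}^i$'' versus ``$\bigcup_{r=j}^k$'') are transcribed consistently. The mildly delicate point, as in the original proof, is checking that after each local surgery the re-normalized sequence still satisfies conditions (1)–(3) and is still a \emph{GD}-sequence (i.e.\ we never strictly shorten it): each relocation leaves the length unchanged, and each ``remove footprinter, insert $u$'' swap also preserves length, exactly as argued for Lemma~\ref{lemma:chain1}. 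Hence the cleanest write-up is simply to state that Lemma~\ref{lemma:chain2} follows from Lemma~\ref{lemma:chain1} by reversing the chain ordering, which is the route I would take.
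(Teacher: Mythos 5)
Your proposal is correct and matches the paper's treatment: the paper omits the proof of this lemma, stating only that it ``follows similar lines'' as Lemma~\ref{lemma:chain1}, which is exactly the $X$--$Y$ symmetry (reversal of the chain ordering) that you make explicit and then dualize step by step. Your transcription of the dual cases, including the flip from $N(u)\subseteq N(v_1)$ to $N(u)\supseteq N(v_1)$ and the use of the increasing order $(x_1,\dots,x_{n_1})$ in part (1), is consistent with what the paper intends.
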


\medskip
\begin{lemma}\label{lemma:chain3}
Let $S$ be a GD-sequence of $G$ of type (c) and let $i\in [k]$ be the largest index such that $\cup_{r=1}^{i} X_r \subseteq \hat{S}$. If $i < k$,  then the following is true:
\begin{enumerate}[(1)]
\item If $i \leq k-2$, then there exists a GD-sequence $S'$ of $G$ such that either $(\cup_{r=i+1}^{k} X_r) \cap \hat{S'} = \emptyset$ or $(\cup_{r=i+2}^{k} X_r) \cap \hat{S'} = \emptyset$ and $|X_{i+1} \cap \hat{S'}| = 1$.
\item If $i = k-1$, then either $X_k \cap \hat{S} = \emptyset$ or $|X_k \cap \hat{S}| = 1$.
\end{enumerate}
\end{lemma}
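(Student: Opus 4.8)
The plan is to treat the two items separately, item~(1) being the substantial one; for item~(2) there is nothing to do beyond invoking the standing normalization of $S$. Indeed, if $i=k-1$ then by the choice of $i$ we have $X_k\not\subseteq\hat{S}$, and since $S$ satisfies ``$|X_k\cap\hat{S}|\le 1$ or $X_k\subseteq\hat{S}$'', we get $|X_k\cap\hat{S}|\le 1$, that is, $X_k\cap\hat{S}=\emptyset$ or $|X_k\cap\hat{S}|=1$.

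For item~(1) I would first dispose of the easy half: since $X_{i+1}\not\subseteq\hat{S}$ by maximality of $i$, normalization~(1) gives $|X_{i+1}\cap\hat{S}|\le 1$, so if $(\bigcup_{r=i+2}^{k}X_r)\cap\hat{S}=\emptyset$ we may take $S'=S$ and land in the first alternative when $X_{i+1}\cap\hat{S}=\emptyset$ and in the second when $|X_{i+1}\cap\hat{S}|=1$. So assume $(\bigcup_{r=i+2}^{k}X_r)\cap\hat{S}\ne\emptyset$ and set $r^*=\max\{r:X_r\cap\hat{S}\ne\emptyset\}$, so that $i+2\le r^*\le k$; let $x^*$ be the first vertex of $X_{r^*}$ occurring in $S$. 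The goal is an exchange that returns a GD-sequence in which no vertex of $X_{r^*}\cup\cdots\cup X_k$ occurs while $|X_{i+1}\cap\hat{S}|$ is not increased; iterating it (each pass strictly lowering the top index $r$ with $X_r\cap\hat{S}\ne\emptyset$) terminates at a sequence of the required shape. One preliminary fact feeds the construction: by maximality of $r^*$ we have $X_r\cap\hat{S}=\emptyset$ for every $r>r^*$, so normalization~(3) forces $Y_r\cap\hat{S}\ne\emptyset$ for all $r\in\{r^*+1,\dots,k\}$, giving a supply of high-class $Y$-vertices to substitute with.

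The exchange splits on whether $x^*$ footprints itself. If it does not, then, because the vertices of $X_{r^*}\cap\hat{S}$ occur consecutively and every one of them after the first would otherwise have to footprint itself, we must have $X_{r^*}\cap\hat{S}=\{x^*\}$; moreover $X_{r^*}$ is already dominated when $x^*$ appears, by some vertex of $\bigcup_{s\le r^*}Y_s\cap\hat{S}$, and $x^*$ footprints only a nonempty set $W\subseteq\bigcup_{s\le r^*}Y_s$ of $Y$-vertices. I would then delete $x^*$, insert the vertices of $W\setminus\hat{S}$ at its former position (each footprints itself there, and $X_{r^*}$ stays dominated by the $Y$-vertex just noted), delete from the tail whatever vertices the enlarged dominated set has made illegal, and recount. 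If $x^*$ does footprint itself, then $X_{r^*}\subseteq\hat{S}$ and its vertices form a block before which no vertex of $\bigcup_{s\le r^*}Y_s$ has appeared; here I would replace this block by one assembled from the $Y$-vertices it footprinted together with the guaranteed vertices of $Y_{r^*+1},\dots,Y_k$, again pruning newly illegal tail vertices. The subcase $r^*=k$ is the cleanest: then $N[x^*]=\{x^*\}\cup Y$, no $Y$-vertex precedes the block, and $Y_k$ cannot be dominated by any $X$-vertex preceding the block, so $Y_k$ is among the vertices footprinted by the block, which already supplies enough $Y$-vertices both to re-dominate $X_k$ and to balance the length.

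The main obstacle is precisely this bookkeeping: after substituting $Y$-vertices for the offending $X_{r^*}$-block and deleting the tail vertices that become illegal, one must check that the result is still a legal dominating sequence, that its length equals $|S|$ (so it is again a GD-sequence; here one crucially uses that $S$ is already of maximum length, so the modification cannot \emph{increase} the length, which in turn caps how many tail vertices can be destroyed), and that it can be re-normalized to satisfy (1)--(3) without reintroducing a vertex of $X_{r^*}\cup\cdots\cup X_k$ or enlarging $X_{i+1}\cap\hat{S}$. Getting these counts to balance uniformly across all subcases---a single vertex versus a full block of $X_{r^*}$, $x^*$ footprinting itself or not, $r^*=k$ versus $r^*<k$---is where the work concentrates. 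The symmetric statement with the roles of $X$ and $Y$ exchanged then follows by the same argument.
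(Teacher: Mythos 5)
Your item (2) and the easy half of item (1) match the paper's argument: both follow from the standing normalization ``$|X_r\cap\hat{S}|\le 1$ or $X_r\subseteq\hat{S}$'' together with the maximality of $i$. For the hard half of item (1), your overall strategy (an exchange argument that eliminates the offending high-index $X$-classes one at a time, split on whether the first vertex of the offending class footprints itself) is the same in spirit as the paper's, but you peel classes off from the top index $r^*$ downward, while the paper works from the \emph{minimal} offending index $t\ge i+2$ upward. That difference matters, because the minimal choice makes available the key device your plan lacks: the nonempty set $A$ of vertices of $\bigcup_{r=i+1}^{t-1}X_r$ absent from $\hat{S}$. In the paper, when the first vertex $x$ of $X_t$ in $S$ footprints itself, inserting all of $A$ right after the $X_t$-block (each inserted vertex then footprints itself, since none of its neighbours has yet appeared) and deleting the at most $|A|$ footprinters of $A$ contradicts the maximality of $i$, so that case cannot occur at all; and when $x$ does not footprint itself and footprints some vertex of $\bigcup_{s\le t-1}Y_s$, the paper swaps $x$ for a single vertex of $A\cap X_{t-1}$ --- a vertex that is already dominated and whose neighbourhood contains everything $x$ footprinted --- a one-for-one exchange that cannot undominate anything or invalidate any later vertex, so length is preserved for free.

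Your exchanges instead replace $X$-vertices by $Y$-vertices, and there the plan has a genuine gap, not just unfinished bookkeeping. First, in your non-self-footprinting case it can happen that every vertex of the footprinted set $W$ also occurs later in $S$ (each such $w$ is then legal at its later position because it footprints some vertex of $N(w)\subseteq X$); then $W\setminus\hat{S}=\emptyset$, you delete $x^*$ and insert nothing, the sequence shortens by one, and no replacement is identified. Second, inserting vertices of $W\setminus\hat{S}$ (or a block of $Y$-vertices) at the old position of $x^*$ newly dominates previously undominated vertices of the sets $\bigcup_{r\ge s}X_r$, so the number of tail vertices that become illegal is not bounded by $|W\setminus\hat{S}|-1$, and the length balance --- which you yourself flag as ``where the work concentrates'' --- is exactly the content of the lemma rather than a routine verification. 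To close the argument, follow the paper: take $t$ minimal, rule out the self-footprinting case by contradiction with the maximality of $i$, and otherwise exchange the lone $X_t$-vertex either for a missing vertex of $X_{t-1}$ (when it footprints something in $\bigcup_{s\le t-1}Y_s$) or for the whole class $Y_t$ (when it footprints $Y_t$), then iterate.
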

\begin{proof}
Let $S$ be a GD-sequence of $G$ in which we have a largest index $i$ such that $\cup_{r=1}^{i} X_r \subseteq \hat{S}$. Now, assume that $i < k$ and so, $X_{i+1} \nsubseteq \hat{S}$. If $i=k-1$ and $X_k \cap \hat{S} \neq \emptyset$ then either $|X_k \cap \hat{S}| =1$ or $|X_k \cap \hat{S}| \geq 2$. In the latter case, we get that first vertex of $X_k$ in $S$ footprints itself. So,  Observation~\ref{basicproperty2} ensures that $|X_k \cap \hat{S}| = 1$.

\smallskip
Next, we show that if $i \leq k-2$, then $(\cup_{r=i+2}^{k} X_r) \cap \hat{S} = \emptyset$.  So, let $t \in \{i+2, \ldots, k\}$ be the minimum index such that $X_t \cap \hat{S} \neq \emptyset$. This means that there are some vertices of $\cup_{r=i+1}^{t-1} X_r$ which are not appearing in the sequence $S$. Let $A$ denotes the set of these vertices. Note that $A$ is not the empty set. As $X_t \cap \hat{S} \neq \emptyset$, vertices of $X_t \cap \hat{S}$ appear to footprint some vertices of $N[X_t \cap \hat{S}]= (X_t \cap \hat{S}) \cup (\cup_{r=1}^{t} Y_r)$. Let $x$ be the vertex of $X_t$ which appears first in $S$. We discuss two cases here.

\smallskip
\noindent
\textbf{Case 1:} \underline{$x$ footprints itself}.\\
In this case, we have that no neighbor of $X_t$ appears in $S$ before $x$. So, all vertices in $S$, which footprint vertices of $A$, appear after $x$. Now, we  modify $S$ by removing all such vertices and including all vertices of $A$ in the sequence just after all vertices of $X_t \cap \hat{S}$. We call the modified sequence again by $S$ as it remains a GD-sequence of $G$. Thus, we get a contradiction on $i$ being the largest index satisfying $\cup_{r=1}^{i} X_r \subseteq \hat{S}$. So, this case is not possible.

\smallskip
\noindent
\textbf{Case 2:} \underline{$x$ does not footprint itself}.\\
In this case, we get that all vertices of $X_t$ are footprinted by some vertex of $Y$, which appears before $x$ in $S$. So, $x$ footprints some vertices from the set $\cup_{r=1}^{t} Y_r$. Thus, $|X_t \cap \hat{S}| = 1$ and $X_t \cap \hat{S} = \{x\}$. Now, let $y$ be the vertex which footprints vertices of $A \cap X_{i+1}$ in $S$. Then, there can be two subcases:

\smallskip
\noindent
\textbf{Subcase 2.1:} \underline{$y$ appears after $x$.}\\
In this subcase, we modify $S$ by removing $y$ and including all vertices of $A \cap X_{i+1}$ in the sequence just after $x$. We call the modified sequence again by $S$ as it remains a GD-sequence of $G$. Thus, we get a contradiction on $i$ being the largest index satisfying $\cup_{r=1}^{i} X_r \subseteq \hat{S}$. So, this subcase is not possible.

\smallskip
\noindent
\textbf{Subcase 2.2:} \underline{$y$ appears before $x$.}\\
\textcolor{black}{Here, all vertices of $A$ are footprinted before the appearance of $x$. Recall that $x \in (\cup_{r=i+2}^{k} X_r) \cap \hat{S}$. We get that all vertices of $\cup_{r=i+1}^{k} X_r$ are footprinted before the appearance of $x$. Note that the vertex $x$ itself is footprinted before the appearance of $x$. So, we have, $x$ appears to footprint some vertices of $\cup_{r=1}^{t} Y_r$. This can be further divided in two cases: (i) $x$ does not footprint the vertices of $Y_t$. (ii) $x$ footprints the vertices of $Y_t$.} 

\textcolor{black}{In the first case, $x$ footprints some vertices of the set $\cup_{r=1}^{t-1} Y_r$.  Note that $A \cap X_{t-1} \neq \emptyset$ and vertices of $A \cap X_{t-1}$ do not appear in $S$. Now, we modify $S$ by replacing the vertex $x$ by a vertex of $A \cap X_{t-1}$ and a get a new GD-sequence in which no vertex of $X_t$ appears and one vertex of $X_{t-1}$ appears. If $t=i+2$, then after applying the modification once, we get a GD-sequence $S'$ such that $X_{i+2} \cap \hat{S'} = \emptyset$ and $|X_{i+1} \cap \hat{S'}|=1$. Otherwise, if $t>i+2$, then after applying the modification once, we get a GD-sequence $S'$ such that $X_{t} \cap \hat{S'} = \emptyset$ and $|X_{t-1} \cap \hat{S'}|=1$. Thus, we have another index $t'=t-1 \in \{i+2, \ldots, k\}$ such that  $X_{t'} \cap \hat{S'} \neq \emptyset$.}

\textcolor{black}{In the second case, $x$ footprints vertices of $Y_t$ and we modify $S$ by replacing $x$ with all vertices of $Y_t$. Note that no vertex of $Y_t$ was appearing in the sequence prior to this modification. If $t=i+2$, then after applying the modification once, we get a GD-sequence $S'$ such that $X_{i+2} \cap \hat{S'} = \emptyset$. Otherwise, if $t>i+2$, then after applying the modification once, we get a GD-sequence $S'$ such that $X_{t} \cap \hat{S'} = \emptyset$ and $X_{t-1} \cap \hat{S'}=\emptyset$. Now, we have, either there is no index $t'$ in the set $\{i+2, \ldots, k\}$ such that $X_{t'} \cap \hat{S'} \neq \emptyset$ or there is some $t' \in \{i+2, \ldots, k\}~(t'>t)$ such that  $X_{t'} \cap \hat{S'} \neq \emptyset$.}

\textcolor{black}{In both the cases, we end up with a new GD-sequence $S'$ of $G$. In the former case, we get that $X_t \cap \hat{S'} = \emptyset$ and $|X_{t-1} \cap \hat{S'}| = 1$. The latter case ensures that $X_t \cap \hat{S'} = \emptyset$ and $X_{t-1} \cap \hat{S'} = \emptyset $ (if $t >i+2$). By repeating the above arguments we get that, there is a  GD-sequence $S$ of $G$ such that either $(\cup_{r=i+1}^{k} X_r) \cap \hat{S} = \emptyset$ or $(\cup_{r=i+2}^{k} X_r) \cap \hat{S} = \emptyset$ and $|X_{i+1} \cap \hat{S}| = 1$ for some $i \in [k]$.}
\end{proof}

\smallskip
Analogous to Lemma~\ref{lemma:chain3}, we give a symmetric lemma for the set $Y$ of $G$, whose proof follows similar lines, and is omitted.

\medskip
\begin{lemma}\label{lemma:chain4}
Let $S$ be a GD-sequence of $G$ of type (c) and let $j \in [k]$ be the smallest index such that  $\cup_{r=j}^{k} Y_r \subseteq \hat{S}$. If $j > 1$, then the following is true:
\begin{enumerate}[(1)]
\item If $j \geq 3$, then there exists a GD-sequence $S'$ of $G$ such that either $(\cup_{r=1}^{j-1} Y_r) \cap \hat{S'} = \emptyset$ or $(\cup_{r=1}^{j-2} Y_r) \cap \hat{S'} = \emptyset$ and $|Y_{j-1} \cap \hat{S'}| = 1$.
\item If $j = 2$, then either $Y_1 \cap \hat{S} = \emptyset$ or $|Y_1 \cap \hat{S}| = 1$.
\end{enumerate}
\end{lemma}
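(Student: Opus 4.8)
The plan is to deduce Lemma~\ref{lemma:chain4} from Lemma~\ref{lemma:chain3} instead of repeating the whole argument: swapping the two sides of the chain graph $G$ and reversing the order of its twin-classes turns the statement about the sets $Y_j$ into the statement about the sets $X_i$ that has just been established.

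First I would set up the relabeling. Let $\tilde G$ be the graph $G$ viewed as a chain graph whose first side is $Y$ and whose second side is $X$. Since $N(Y_1)\supseteq N(Y_2)\supseteq\cdots\supseteq N(Y_k)$, the open-twin classes of $Y$, listed so that their neighborhoods are increasingly nested, are $\tilde X_i := Y_{k+1-i}$ for $i\in[k]$; using the identities $N(X_i)=\bigcup_{r=1}^{i}Y_r$ and $N(Y_j)=\bigcup_{r=j}^{k}X_r$ from Observation~\ref{chainpartition}, a short computation then shows that the induced second-side classes are $\tilde Y_j := X_{k+1-j}$ and that $N(\tilde X_i)=\bigcup_{r=1}^{i}\tilde Y_r$ and $N(\tilde Y_j)=\bigcup_{r=j}^{k}\tilde X_r$. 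Hence $\tilde G$ is again a chain graph with $k$ twin-classes on each side, and since $\tilde G=G$ as a graph, $S$ is still a GD-sequence, it is still of type~(c) (the condition $X\cap\hat S\neq\emptyset$ and $Y\cap\hat S\neq\emptyset$ is symmetric in $X$ and $Y$), and it still satisfies the three normalizing assumptions imposed on GD-sequences in this section, since those are invariant under the identification $X_r\leftrightarrow\tilde Y_{k+1-r}$ and $Y_r\leftrightarrow\tilde X_{k+1-r}$.

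Then I would translate the indices. From $\bigcup_{r=1}^{i}\tilde X_r=\bigcup_{s=k+1-i}^{k}Y_s$ one obtains that the largest index $i$ with $\bigcup_{r=1}^{i}\tilde X_r\subseteq\hat S$ equals $k+1-j$, where $j$ is the smallest index with $\bigcup_{r=j}^{k}Y_r\subseteq\hat S$ as in the hypothesis; in particular, $j>1$ is equivalent to $i<k$, the case $j=2$ to $i=k-1$, and the case $j\ge 3$ to $i\le k-2$. Applying Lemma~\ref{lemma:chain3} to $\tilde G$ and $S$, and then reading its conclusion back through $\tilde X_{i+1}=Y_{j-1}$, $\bigcup_{r=i+1}^{k}\tilde X_r=\bigcup_{r=1}^{j-1}Y_r$, $\bigcup_{r=i+2}^{k}\tilde X_r=\bigcup_{r=1}^{j-2}Y_r$, and $\tilde X_k=Y_1$, yields exactly parts (1) and (2) of Lemma~\ref{lemma:chain4}.

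The only delicate point is the bookkeeping of the relabeling step: checking that $\{\tilde X_i : i\in[k]\}$ and $\{\tilde Y_j : j\in[k]\}$ really form the open-twin partition of $\tilde G$ with increasingly nested neighborhoods, and that the normalizing assumptions carry over. This, however, is immediate from the symmetric identities of Observation~\ref{chainpartition}. If a self-contained proof is preferred, the same conclusion follows by mirroring the proof of Lemma~\ref{lemma:chain3} line by line: one replaces each class $X_r$ by $Y_{k+1-r}$, the role of ``a vertex with a smaller neighborhood on the $X$-side'' by the corresponding vertex on the $Y$-side, and the two replacement moves of Subcase~2.2 of that proof by their mirror images; no new idea is needed.
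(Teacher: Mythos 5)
Your proposal is correct and matches the paper's intent: the paper omits the proof of Lemma~\ref{lemma:chain4} with the remark that it ``follows similar lines'' as Lemma~\ref{lemma:chain3}, i.e.\ it relies on exactly the $X$/$Y$ symmetry you exploit. Your explicit relabeling $\tilde X_i = Y_{k+1-i}$, $\tilde Y_j = X_{k+1-j}$ and the index translation $j = k+1-i$ check out against Observation~\ref{chainpartition} and the (symmetric) normalizing assumptions, so your reduction to Lemma~\ref{lemma:chain3} is in fact a more rigorous rendering of the same argument.
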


\medskip
\begin{lemma}\label{lemma:chain5}
Let $S$ be a GD-sequence of $G$ satisfying all properties of Lemmas~\ref{lemma:chain1}, \ref{lemma:chain2}, \ref{lemma:chain3} and~\ref{lemma:chain4}, let $i\in [k]$ have the role as in Lemma~\ref{lemma:chain3}, and let $j\in [k]$ have the role as in Lemma~\ref{lemma:chain4}. The following statements are true:
\begin{enumerate}[(1)]
\item $j \in \{i, i+1\}$;
\item if $j=i$ then $|X_i|=1$ or $|Y_i|=1$;
\item if $j = i+1$, then either $|X_{i+1} \cap \hat{S}|=1$ or $|Y_{i} \cap \hat{S}|=1$.
\end{enumerate}
\end{lemma}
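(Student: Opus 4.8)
The plan is to bring two ingredients to bear that were not needed in Lemmas~\ref{lemma:chain1}--\ref{lemma:chain4}: the fact that $\hat S$ is a dominating set, and the fact that $|\hat S|=\gamma_{gr}(G)$ is maximum. Throughout I take $i,j$ as in the statement (so implicitly $i<k$ and $j>1$, the cases $i=k$, $j=1$ being absorbed by types (a)/(b)); Lemmas~\ref{lemma:chain3} and~\ref{lemma:chain4} give $X_r\cap\hat S=\emptyset$ for $r\ge i+2$, $Y_r\cap\hat S=\emptyset$ for $r\le j-2$, and $|X_{i+1}\cap\hat S|\le 1$, $|Y_{j-1}\cap\hat S|\le 1$, while maximality of $i$ and minimality of $j$ give $X_{i+1}\not\subseteq\hat S$ and $Y_{j-1}\not\subseteq\hat S$, and property~(3) gives $X_\ell\cap\hat S\ne\emptyset$ or $Y_\ell\cap\hat S\ne\emptyset$ for every $\ell\in[k]$.

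The first step is a small \emph{block lemma}: if $X_a\subseteq\hat S$ and $Y_b\subseteq\hat S$ with $a\ge b$, then $|X_a|=1$ or $|Y_b|=1$. Indeed, by our conventions the vertices of $X_a$ occupy a consecutive block of $S$ and so do those of $Y_b$; assume the $X_a$-block precedes the $Y_b$-block (the other case is symmetric). Since $N[x]=\{x\}\cup Y_1\cup\cdots\cup Y_a\supseteq Y_b$ for $x\in X_a$, by the end of the $X_a$-block all of $X_a$ and all of $Y_1,\dots,Y_a$ are footprinted; hence, when the first vertex $y$ of $Y_b$ appears it can only footprint vertices of $X_b\cup\cdots\cup X_k$, and it footprints \emph{all} of those still unfootprinted, so for any further $y'\in Y_b$ the whole set $N[y']=\{y'\}\cup X_b\cup\cdots\cup X_k$ is already footprinted — impossible, so $|Y_b|=1$. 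This immediately yields statement~(2): when $j=i$ both $X_i$ and $Y_i$ lie in $\hat S$, so $|X_i|=1$ or $|Y_i|=1$.

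For statement~(1) I first bound $j$ from above. If some $\ell$ satisfied $i+2\le\ell\le j-2$, then $X_\ell\cap\hat S=Y_\ell\cap\hat S=\emptyset$, violating property~(3); hence $j\le i+3$. To rule out $j=i+2$: property~(3) at $\ell=i+1$ forces $X_{i+1}\cap\hat S\ne\emptyset$ or $Y_{i+1}\cap\hat S\ne\emptyset$. In the first case, replacing the single vertex of $X_{i+1}\cap\hat S$ in an appropriate ordering by the sequence $(X_1,\dots,X_i,X_{i+1},y,Y_{i+2},\dots,Y_k)$ with $y\in Y_{i+1}$ gives a legal dominating sequence of length $|\hat S|+(|X_{i+1}|-1)$, so maximality forces $|X_{i+1}|=1$, contradicting $X_{i+1}\not\subseteq\hat S$; in the second case domination of $Y_{i+1}\setminus(Y_{i+1}\cap\hat S)$ fails unless $|Y_{i+1}|=1$, but then $Y_{j-1}=Y_{i+1}\subseteq\hat S$, contradicting minimality of $j$. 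To rule out $j=i+3$: property~(3) at $\ell=i+1,i+2$ together with the structural facts force $X_{i+1}\cap\hat S=\{x^*\}$, $Y_{i+2}\cap\hat S=\{y^*\}$, and domination forces $|Y_{i+2}|=1$; then $\hat S=\bigcup_{r\le i}X_r\cup\{x^*,y^*\}\cup\bigcup_{r\ge i+3}Y_r$ and $(X_1,\dots,X_i,x^*,y,y^*,Y_{i+3},\dots,Y_k)$ with $y\in Y_{i+1}$ is a legal dominating sequence of length $|\hat S|+1$, a contradiction. The lower bound $j\ge i$ is obtained analogously: if $j\le i-1$ then $X_{i-1},X_i,Y_{i-1},Y_i$ all lie wholly in $\hat S$, and the block lemma applied to the pairs $(i,i)$, $(i,i-1)$, $(i-1,i-1)$, $(i,j)$ forces many of the overlap classes $X_j,\dots,X_i,Y_j,\dots,Y_i$ to be singletons, after which one shows that such an $\hat S$ admits no legal ordering (the $X$-vertices and $Y$-vertices in the overlap compete for the same ``fresh'' footprints) — or, where it does, that it can be strictly extended, contradicting maximality. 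Finally, statement~(3): if $j=i+1$ but $|X_{i+1}\cap\hat S|\ne 1$ and $|Y_i\cap\hat S|\ne 1$, then by Lemmas~\ref{lemma:chain3} and~\ref{lemma:chain4} both intersections are empty, so $\hat S=\bigcup_{r=1}^iX_r\cup\bigcup_{r=i+1}^kY_r$; but then $(X_1,\dots,X_i,y,Y_{i+1},\dots,Y_k)$ with $y\in Y_i$ is legal and dominating of length $|\hat S|+1$, a contradiction.

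The main obstacle I anticipate is the exclusion of the cases $j\notin\{i,i+1\}$. There the domination constraints alone do not finish the argument — they are satisfiable precisely when the relevant ``gap'' (or overlap) classes happen to be singletons — so one must in each case produce an explicit strictly longer legal dominating sequence and verify its legality class by class, and for the lower bound $j\ge i$ one must chain the block lemma carefully enough to trigger such a construction (or a non-orderability argument). The remaining steps, including statements~(2) and~(3) and the bookkeeping of which classes meet $\hat S$, are then routine.
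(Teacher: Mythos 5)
Most of your proposal is sound and runs parallel to the paper's proof: the block lemma is correct and gives part~(2) exactly as the paper does (there the argument is phrased directly in terms of which of $X_i,Y_i$ comes first); the bound $j\le i+3$ via property~(3), and the elimination of $j=i+2$ and $j=i+3$ by combining the cardinality bound on $|\hat S|$ coming from Lemmas~\ref{lemma:chain3} and~\ref{lemma:chain4} with an explicit strictly longer legal dominating sequence, is the same mechanism the paper uses (your explicit sequences check out). Your proof of part~(3) establishes only the inclusive disjunction (both intersections cannot be empty); the paper additionally rules out both being equal to $1$ via a ``four blocks, the last one has nothing left to footprint'' argument, which is what Lemma~\ref{lemma:chain6} ultimately relies on, but for the literal statement your version suffices.

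The genuine gap is the lower bound $j\ge i$, which you explicitly defer. Your proposed route --- apply the block lemma to several pairs to force various classes among $X_j,\dots,X_i,Y_j,\dots,Y_i$ to be singletons, then argue ``no legal ordering exists or the sequence can be extended'' --- does not close the case: the singleton conclusions are perfectly consistent with $j<i$ (all the overlap classes could be singletons), so the contradiction must come from an ordering argument that you never supply, and it is not the routine bookkeeping you suggest. The fix is available with a tool you already use elsewhere: if $j\le i-1$ then all four of $X_{i-1},X_i,Y_{i-1},Y_i$ are wholly contained in $\hat S$, each occupying a consecutive block, and whichever of these four blocks appears last in $S$ has its entire closed neighborhood already footprinted by the other three (e.g.\ if $Y_{i-1}$ is last, then $Y_{i-1}\subseteq N(X_{i-1})$ is footprinted, $\bigcup_{r\ge i}X_r\subseteq N(Y_i)$ is footprinted, and $X_{i-1},X_i$ are footprinted because they have already appeared), so its vertices footprint nothing --- a contradiction. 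This is precisely the device the paper uses for part~(3), and a case analysis on the relative order of $X_i$ and $Y_i$ in this spirit is how the paper proves $j\ge i$. Without this (or an equivalent) argument, part~(1) of the lemma is not proved.
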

\begin{proof}
Since $S$ satisfies all properties of Lemmas~\ref{lemma:chain1}, \ref{lemma:chain2}, \ref{lemma:chain3} and~\ref{lemma:chain4}, there are integers $i \in [k-1],j \in \{2, \ldots, k\}$ such that $\cup_{r=1}^{i} X_r \subseteq \hat{S}$ and $\cup_{r=j}^{k} Y_r \subseteq \hat{S}$. It also holds that either $X_{i+1} \cap \hat{S} = \emptyset$ or $|X_{i+1} \cap \hat{S}|=1$. Similarly, either $Y_{j-1} \cap \hat{S} = \emptyset$ or $|Y_{j-1} \cap \hat{S}|=1$. Using Lemmas \ref{lemma:chain3} and \ref{lemma:chain4}, we can also say that if $i \leq k-2$, then  $(\cup_{r=i+2}^{k} X_r) \cap \hat{S} = \emptyset$ and, if  $j \geq 3$, then $(\cup_{r=1}^{j-2} Y_r) \cap \hat{S} = \emptyset$.

\smallskip
First, we show that $j \geq i$. To the contrary, assume that $j < i$. This implies that $X_{i-1} \cup Y_{i-1} \cup X_i \cup Y_i \subseteq \hat{S}$. If $X_i$ appears before $Y_i$, then $Y_{i-1}$ appears after $Y_i$. In this case, we see that $X_{i-1}$ can not appear anywhere in the sequence. So, this case is not possible. If $Y_i$ appears before $X_i$, then $X_{i-1}$ appears after $X_i$. Here,  $Y_{i-1}$ can not appear anywhere in the sequence. So, this case is also not possible. So, we get that $j \geq i$. Now, either $j=i$ or $j \geq i+1$. 

\smallskip
First, we assume $j=i$. If $X_i$ appears before $Y_i$, then an eventual second vertex from $Y_i$ does not footprint any vertex, a contradiction. So, $|Y_i|=1$. In the similar way, we get that $|X_i|=1$, when $Y_i$ appears before $X_i$. Thus, property $(2)$ holds.

\smallskip
Next, assume that $j \geq i+1$. We need to show that $j=i+1$. First, we show that $j \leq i+3$. If $j \geq i+4$, then $X_{i+2} \cap \hat{S} = \emptyset, Y_{i+2} \cap \hat{S} = \emptyset $. This contradicts Observation~\ref{basicproperty3}. So, $j \in \{i+1, i+2, i+3\}$.  If $j=i+3,$ then we see that $\gamma_{gr} (G) \leq \alpha + \beta +2$, where $\alpha = \sum_{r=1}^{i} |X_r|$ and $\beta = \sum_{r=i+3}^{k} |Y_r|$. But, the sequence
$(X_1) \oplus (X_2) \oplus \cdots \oplus (X_i) \oplus (X_{i+1}) \oplus (Y_{k}) \oplus (Y_{k-1}) \oplus \cdots \oplus (Y_{i+3}) \oplus (x) \oplus (X_{i+2})$, where $x \in X_{i+3}$ is a dominating sequence of $G$ of length at least $\alpha + \beta +3$. So, $j \neq i+3$. If $j=i+2$, then we see that $\gamma_{gr} (G) \leq \alpha + \beta' +2$, where $\beta' = \sum_{r=i+2}^{k} |Y_r|$. Now, consider the sequence $ S_0 = (X_1) \oplus (X_2) \oplus \cdots \oplus (X_i) \oplus (Y_{k}) \oplus (Y_{k-1}) \oplus \cdots \oplus (Y_{i+2}) \oplus (y) \oplus (Y_{i+1})$, where $y \in Y_{i}$. If $|Y_{i+1}| > 1$, $S_0$ is a dominating sequence of $G$ having length at least $\alpha + \beta' +3$. This implies that $|Y_{i+1}| = 1$ and $S_0$ is also a GD-sequence of $G$. So, we consider $S_0$ as a GD-sequence of $G$ as it also satisfies Lemmas \ref{lemma:chain1}, \ref{lemma:chain2}, \ref{lemma:chain3} and \ref{lemma:chain4} and thus, $j=i+1$. Thus, property $(1)$ holds.

\smallskip
For the property $(3)$, we show that either $|X_{i+1} \cap \hat{S}|=1$ or $|Y_{i} \cap \hat{S}|=1$. So, first we assume that neither is true, that is, $|X_{i+1} \cap \hat{S}|=0$ and $|Y_{i} \cap \hat{S}|=0$. Now, if $Y_{i+1}$ appears before $X_i$, then the length of $S$ can be increased by including a vertex of $X_{i+1}$ just before $X_i$, but $S$ is a dominating sequence of $G$ of maximum length. So, $Y_{i+1}$ appears after $X_i$, thus length of $S$ can be increased by including a vertex of $Y_i$ just before $Y_{i+1}$, but $S$ is a dominating sequence of $G$ of maximum length. Hence, $|X_{i+1} \cap \hat{S}|=1$ or $|Y_{i} \cap \hat{S}|=1$. If $|X_{i+1} \cap \hat{S}|=1$ and $|Y_{i} \cap \hat{S}|=1$, then suppose that $X_{i+1} \cap \hat{S} = \{a\}$ and $Y_{i} \cap \hat{S} = \{b\}$. \textcolor{black}{Clearly, vertices of the four sets $\{a\}, \{b\}, X_i$ and $Y_{i+1}$ appear in $S$. Let $\mathcal{K} = \{\{a\}, \{b\}, X_i, Y_{i+1}\}$. Recall that all vertices of $X_i$ appear together in $S$. Similarly, all vertices of $Y_{i+1}$ appear together in $S$. Let $A \in \mathcal{K}$ be the set whose vertices appear after the other three sets of $\mathcal{K}$ in the sequence $S$. Then, all vertices of $N[A]$ are footprinted before the appearance of vertices of $A$.} Therefore, either $|X_{i+1} \cap \hat{S}|=1$ or $|Y_{i} \cap \hat{S}|=1$. Thus property $(3)$ holds.
\end{proof}

\medskip
\begin{lemma}\label{lemma:chain6}
Let $S$ be a GD-sequence of $G$ of type (c) and satisfies all properties of Lemmas~\ref{lemma:chain1}, \ref{lemma:chain2}, \ref{lemma:chain3}, \ref{lemma:chain4} and~\ref{lemma:chain5}. Then the sequence $S$ satisfies the following properties
\begin{enumerate}[(1)]
\item If $j=i$ then, $X_{i+1} \cap \hat{S} = \emptyset$ and $Y_{i-1} \cap \hat{S} = \emptyset$;
\item If $j = i+1$, then either $|X_{i+1} \cap \hat{S}|=1$ and $Y_{i} \cap \hat{S} = \emptyset$ or $|Y_{i} \cap \hat{S}|=1$ and $X_{i+1} \cap \hat{S} = \emptyset$.
\end{enumerate}
\end{lemma}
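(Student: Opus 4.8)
The plan is to handle the two parts separately, deriving part~(2) quickly from Lemma~\ref{lemma:chain5}(3) together with the two earlier "counting" lemmas, and spending the real effort on part~(1).

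For part~(2), I would assume $j=i+1$. Then Lemma~\ref{lemma:chain3} applied with this $i$ gives $|X_{i+1}\cap\hat{S}|\le 1$ and $X_{i+2}\cup\cdots\cup X_k$ disjoint from $\hat{S}$, while Lemma~\ref{lemma:chain4} applied with this $j$ gives (since $Y_{j-1}=Y_i$) that $|Y_i\cap\hat{S}|\le 1$ and $Y_1\cup\cdots\cup Y_{i-1}$ disjoint from $\hat{S}$. By Lemma~\ref{lemma:chain5}(3) at least one of $|X_{i+1}\cap\hat{S}|,|Y_i\cap\hat{S}|$ equals $1$, so it remains only to exclude that both equal $1$. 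If $X_{i+1}\cap\hat{S}=\{a\}$ and $Y_i\cap\hat{S}=\{b\}$, then $\{a\},\{b\},X_i,Y_{i+1}$ are four pairwise disjoint nonempty subsets of $\hat{S}$ (here $X_i\subseteq\hat{S}$ since $i\le i$, and $Y_{i+1}=Y_j\subseteq\hat{S}$), with $X_i$ and $Y_{i+1}$ each occupying a contiguous block of $S$ by the standing conventions; whichever of these four blocks ends latest in $S$ has, by the two disjointness facts above, its entire closed neighbourhood already footprinted before its first vertex appears, contradicting that $S$ is a closed neighbourhood sequence --- this is exactly the $\mathcal{K}$-argument already carried out inside the proof of Lemma~\ref{lemma:chain5}(3). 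Hence exactly one of the two sizes is $1$ and, being at most $1$, the other is $0$, which is the assertion of part~(2).

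For part~(1), I would assume $j=i$, so that $X_1\cup\cdots\cup X_i\subseteq\hat{S}$ and $Y_i\cup\cdots\cup Y_k\subseteq\hat{S}$, and first prove $X_{i+1}\cap\hat{S}=\emptyset$, then use it to prove $Y_{i-1}\cap\hat{S}=\emptyset$. Suppose $X_{i+1}\cap\hat{S}\ne\emptyset$; by Lemma~\ref{lemma:chain3} it is a single vertex $\{a\}$, and since $i$ is the \emph{largest} index with $X_1\cup\cdots\cup X_i\subseteq\hat{S}$ we must in fact have $|X_{i+1}|\ge 2$ (so $X_{i+2}\cup\cdots\cup X_k$ is disjoint from $\hat{S}$, vacuously if $i=k-1$). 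I would then consider the four pairwise disjoint nonempty sets $\{a\},X_i,Y_i,Y_{i+1}$, all contained in $\hat{S}$ because $j=i$, let $B$ be whichever ends latest in $S$ (each a contiguous block), and verify in the four cases $B\in\{\{a\},X_i,Y_i,Y_{i+1}\}$ that every vertex of $N[u]$, where $u$ is the first vertex of $B$, is footprinted strictly before $u$ appears, contradicting legality of $S$. The inputs are: $Y_1\cup\cdots\cup Y_{i-2}$ and $X_{i+2}\cup\cdots\cup X_k$ are disjoint from $\hat{S}$ (Lemmas~\ref{lemma:chain4},~\ref{lemma:chain3}) while $|Y_{i-1}\cap\hat{S}|\le 1$ and $X_{i+1}\cap\hat{S}=\{a\}$; using $N(X_r)=Y_1\cup\cdots\cup Y_r$, for every $r\le i-1$ some footprinter of $Y_r$ lies in $X_r\cup\cdots\cup X_i$ and hence appears no later than the block $X_i$, and symmetrically, using $N(Y_r)=X_r\cup\cdots\cup X_k$, any $X_r$ with $r\ge i+1$ has a footprinter appearing no later than one of $Y_i,Y_{i+1}$; and $u$ itself is footprinted before it appears because $N[u]$ meets one of the three non-$B$ sets, which precede $B$. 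Hence $X_{i+1}\cap\hat{S}=\emptyset$. The proof of $Y_{i-1}\cap\hat{S}=\emptyset$ is entirely symmetric, run with the four sets $\{b\},Y_i,X_i,X_{i-1}$ (where $b\in Y_{i-1}\cap\hat{S}$ and $X_{i-1}\subseteq\hat{S}$), now also invoking the just-established $X_{i+1}\cap\hat{S}=\emptyset$; the boundary cases $i=1$ (the $Y_{i-1}$-claim being vacuous) and $i=k$ (the $X_{i+1}$-claim being vacuous) need only the obvious trimming of these four-element lists.

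The main obstacle lies entirely in part~(1): in each "last block $B$" case one must pin down that the vertices of $N[u]$ which are \emph{not} themselves in $\hat{S}$ --- the classes $Y_1,\dots,Y_{i-1}$ in one orientation, $X_{i+1}\setminus\{a\},X_{i+2},\dots,X_k$ in the other --- together with $u$ itself, are genuinely footprinted before $B$ begins, which requires exhibiting for each such class a concrete footprinter provably preceding $B$; this forces careful bookkeeping with the contiguity conventions and with the neighbourhood descriptions $N(X_r)=\bigcup_{s\le r}Y_s$, $N(Y_r)=\bigcup_{s\ge r}X_s$ from Observation~\ref{chainpartition}. The conceptual content --- "the last of the boundary blocks would footprint nothing, contradicting legality" --- is uniform and short; making all of the roughly $4{+}4$ subcases airtight and treating $i\in\{1,k\}$ correctly is the technical cost.
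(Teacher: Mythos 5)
Your proposal is correct and rests on the same underlying idea as the paper's proof: every claim is established by exhibiting a vertex (the first vertex of the latest-occurring block among a small family of pairwise disjoint subsets of $\hat{S}$) whose closed neighbourhood is entirely footprinted before it appears, contradicting legality --- exactly the $\mathcal{K}$-style argument the paper uses for part~(2) and, in a slightly different packaging, for part~(1). The only difference is organisational: for part~(1) the paper splits into cases according to whether the vertex of $X_{i+1}$ footprints itself and then analyses positions, whereas you run the uniform ``last of the four blocks $\{a\},X_i,Y_i,Y_{i+1}$ footprints nothing'' argument; both are sound and use the same inputs from Lemmas~\ref{lemma:chain3} and~\ref{lemma:chain4}.
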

\begin{proof}
First, let $j=i$ and $X_{i+1} \cap \hat{S} \neq \emptyset$. This implies that $|X_{i+1} \cap \hat{S}| = 1$. There are two cases.

\smallskip
\noindent
\textbf{Case 1:} \underline{A vertex of $X_{i+1}$ footprints itself}.\\
In this case, both $Y_i$ and $Y_{i+1}$ appear after $X_{i+1}$. Note that $Y_{i+1}$ appears before $Y_i$. Here, we see that $X_i$ can not appear after $Y_i$ as all vertices in the closed neighborhood of $X_i$ are footprinted before its appearance. So, $X_i$ appears before $Y_i$, but, then all vertices in the closed neighborhood of $Y_i$ are footprinted before its appearance.

\smallskip
\noindent
\textbf{Case 2:} \underline{Vertices of $X_{i+1}$ are footprinted before their appearance in $S$}.\\
Here, at least one of $Y_i$ and $Y_{i+1}$ appear before $X_{i+1}$. So, first we assume that $Y_i$ appears before $X_{i+1}$. Let $A$ denotes the set of vertices which appear before $Y_i$, $B$  denotes the set of vertices which appear before $X_{i+1}$ and after $Y_i$ and $C$ denotes the set of vertices which appear after $X_{i+1}$ in $S$. In this case, $X_i \nsubseteq C$, so $X_i \subseteq A \cup B$. But, then one of $Y_{i+1}$ and $X_{i+1}$ does not footprint any vertex, a contradiction. Similar arguments can be given when $Y_{i+1}$ appears before $X_{i+1}$.
Hence, $X_{i+1} \cap \hat{S} = \emptyset$. In the similar manner, we can prove that $Y_{i-1} \cap \hat{S} = \emptyset$.

\smallskip
Next, we assume that $j = i+1$ and $|X_{i+1} \cap \hat{S}|=1$. We need to show that $Y_{i} \cap \hat{S} = \emptyset$. On the contrary, suppose that $Y_{i} \cap \hat{S} \neq \emptyset$. Here, we see that $S$ contain vertices from all of the sets $X_i, X_{i+1}, Y_i$ and $Y_{i+1}$. Then there exists a vertex $a \in X_i \cup X_{i+1} \cup Y_i \cup Y_{i+1}$ whose closed neighborhood is footprinted before its appearance, a contradiction. In the similar way, we can prove that if $|Y_{i} \cap \hat{S}|=1$, then $X_{i+1} \cap \hat{S} = \emptyset$.
\end{proof}

\medskip
\begin{lemma}\label{lemma:chain7}
Let $S=(v_1, v_2, \ldots, v_p)$ be a GD-sequence of $G$, which satisfies $X \subseteq \hat{S}$. Then one of the following statements is true.\\
$1.$ $\gamma_{gr} (G) = |X|$\\
$2.$ $\gamma_{gr} (G) = |Y|$\\
$3.$ $\gamma_{gr} (G) = |X| + |Y_k|$
\end{lemma}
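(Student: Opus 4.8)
The plan is to pin down the exact value of $\gamma_{gr}(G)=|S|$ by analysing $Y\cap\hat S$, exploiting that each $x\in X_k$ has the maximal closed neighbourhood $N[x]=\{x\}\cup Y$ (recall $N(X_k)=\bigcup_{r=1}^k Y_r=Y$). If $Y\cap\hat S=\emptyset$ then $\hat S=X$, so $\gamma_{gr}(G)=|S|=|X|$ and item~$1$ holds; so assume henceforth $Y\cap\hat S\ne\emptyset$.

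I would first prove $|X_k|=1$: as soon as one vertex of $X_k$ is chosen, all of $Y$ is dominated, so if $|X_k|\ge2$ then, letting $w$ be the first vertex of $Y$ in $S$ and noting that the vertices of $X_k$ occur consecutively by property~$(1)$, one reaches a contradiction in both positions of $w$ relative to the $X_k$-block: if $w$ precedes it, the second vertex of $X_k$ has nothing to footprint; if $w$ follows it, then $w$ and every later vertex of $Y$ can only footprint a vertex of $X$, which would then sit in $\hat S$ with its whole closed neighbourhood already dominated by $x_k$. Write $X_k=\{x_k\}$. By the same reasoning $x_k$ cannot footprint itself (otherwise no vertex of $Y$ precedes it, all of $Y$ becomes dominated right after it, and then no vertex of $Y$ could footprint anything later either, forcing $Y\cap\hat S=\emptyset$). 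Hence $x_k$ footprints only vertices of $Y$, some vertex of $Y$ precedes $x_k$, and after $x_k$ is chosen all of $Y$ is dominated; therefore every vertex of $W:=Y\cap\hat S$ occurs before $x_k$, and every vertex of $X$ occurring after $x_k$ footprints only itself. Since a vertex of $X_i$ footprinting itself cannot be preceded by any vertex of $Y_1\cup\dots\cup Y_i$ while all of $W$ precedes it, every vertex of $X$ after $x_k$ lies in $X_1\cup\dots\cup X_{j_1-1}$, where $j_1=\min\{r:W\cap Y_r\ne\emptyset\}$; in particular $X_{j_1}\cup\dots\cup X_{k-1}$ all occur before $x_k$.

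Now I split on whether $W\subseteq Y_k$. If $W\subseteq Y_k$ then $j_1=k$; by property~$(2)$ either $W=Y_k$, or $|W|=1$, in which case $|S|=|X|+1$ and, since $(Y_k)\oplus(x_k)\oplus(X_1)\oplus\dots\oplus(X_{k-1})$ is a dominating sequence of length $|X|+|Y_k|$, we get $|X|+1=\gamma_{gr}(G)\ge|X|+|Y_k|$, so $|Y_k|=1$ and again $W=Y_k$. Thus $\gamma_{gr}(G)=|S|=|X|+|Y_k|$ and item~$3$ holds. If instead $W\not\subseteq Y_k$, i.e. $j_1\le k-1$, I claim item~$2$ holds, namely $\gamma_{gr}(G)=|Y|$. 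Since $(Y_k)\oplus(Y_{k-1})\oplus\dots\oplus(Y_1)$ is a dominating sequence, $\gamma_{gr}(G)\ge|Y|$; as $|S|=|X|+|W|$, it remains to prove $|Y\setminus W|\ge|X|$.

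For this last inequality I would argue as follows. From the description of $S$, the set $P$ of vertices of $X$ occurring before $x_k$ satisfies $\max\{i:P\cap X_i\ne\emptyset\}=k-1$, so the prefix of $S$ ending just before $x_k$ dominates exactly $P\cup\{x_k\}\cup W\cup(Y_1\cup\dots\cup Y_{k-1})$; hence $x_k$ footprints exactly $Y_k\setminus W$, while the vertices of $P$ are the only vertices of that prefix that footprint vertices of $Y$ other than themselves, and together they footprint all of $(Y_1\cup\dots\cup Y_{k-1})\setminus W$. Every vertex of $Y\setminus W$ is therefore footprinted by a vertex of $X$, and $|Y\setminus W|=\big|(Y_1\cup\dots\cup Y_{k-1})\setminus W\big|+|Y_k\setminus W|$; the goal is to show this is at least $|X|=|P|+1+|Q|$, where $Q$ is the (self‑footprinting) set of vertices of $X$ after $x_k$. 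I expect this to be the main obstacle: it is not a consequence of mere counting, but must be wrung out of the order in which the classes $X_{j_1},\dots,X_{k-1}$ and the vertices of $W$ are interleaved inside the prefix — each time a vertex from a new class $X_i$ or $Y_r$ is first chosen it footprints its entire so‑far‑undominated neighbourhood, producing the "extra'' footprints that pay for the $|X|$ vertices of $X$. A shortcut worth trying is to rearrange $S$ into a GD-sequence whose first vertex lies in $Y_1$ and then invoke Lemma~\ref{lemma:chain1}(1), which at once produces a type~(a) GD-sequence $S'$ with $\hat{S'}\subseteq Y$, whence $\gamma_{gr}(G)=|S'|\le|Y|$.
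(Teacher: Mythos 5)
Your preparatory analysis is sound and your first case is genuinely nicer than the paper's route: the facts that $|X_k|=1$, that $x_k$ does not footprint itself, that all of $W=Y\cap\hat S$ precedes $x_k$, and that the $X$-vertices after $x_k$ self-footprint and lie in $X_1\cup\dots\cup X_{j_1-1}$ are all correctly derived, and the subcase $W\subseteq Y_k$ is handled by a clean direct argument, whereas the paper routes everything through Lemmas~\ref{lemma:chain2}, \ref{lemma:chain4}, \ref{lemma:chain5} and~\ref{lemma:chain6}.

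The subcase $W\not\subseteq Y_k$, however, is broken, and not just incomplete: the inequality $|Y\setminus W|\ge|X|$ that you flag as the main obstacle is false, and so is the conclusion $\gamma_{gr}(G)=|Y|$ that you want to draw from it. Take $k=3$ with all classes singletons, $X_i=\{x_i\}$, $Y_i=\{y_i\}$, $N(x_i)=\{y_1,\dots,y_i\}$. The sequence $S=(x_2,y_2,x_3,x_1)$ is legal and dominating ($x_2$ footprints $x_2,y_1,y_2$; then $y_2$ footprints $x_3$; then $x_3$ footprints $y_3$; then $x_1$ footprints itself), and since $\alpha(G)=3$ one checks $\gamma_{gr}(G)=4$, so $S$ is a GD-sequence with $X\subseteq\hat S$ and $W=\{y_2\}\not\subseteq Y_3$. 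Here $|Y\setminus W|=2<3=|X|$ and $\gamma_{gr}(G)=4\neq 3=|Y|$; the lemma survives only because item~3 applies ($|X|+|Y_k|=4$). Your proposed shortcut fails on the same example: no GD-sequence can start in $Y_1$, since such a sequence footprints all of $X$ with its first move and therefore has length at most $|Y|=3<\gamma_{gr}(G)$. So in this subcase you cannot bound $|S|$ by $|Y|$; you must instead show that the value $|X|+|Y_k|$ (or $|Y|$) is still attained, which is what the paper does by replacing $S$ with a modified GD-sequence satisfying $\cup_{r=j}^{k}Y_r\subseteq\hat S$ and then forcing $j=k$ and $Y_{k-1}\cap\hat S=\emptyset$, so that $\hat S=X\cup Y_k$.
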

\begin{proof}
If $S$ is not a type (c) GD-sequence of $G$, then $Y \cap \hat{S} = \emptyset$ and so, $\gamma_{gr} (G) = |X|$.
So, assume that $S$ is a type (c) GD-sequence of $G$.

\smallskip
\noindent
\textbf{Case 1:} \underline{$v_1 \in Y_1$}.\\
In this case, all vertices of $X$ are footprinted by $v_1$. So, all the vertices $v_2, \ldots, v_p$ appear to footprint vertices of $Y \setminus \{v_1\}$ only. This implies that $\gamma_{gr}(G) \leq |Y|$ and so, $\gamma_{gr}(G) = |Y|$.

\smallskip
\noindent
\textbf{Case 2:} \underline{$v_1 \notin Y_1$}.\\
Here, we see that $S$ satisfies all conditons of part (2) of Lemma~\ref{lemma:chain1} with $i=k$. Using Lemma~\ref{lemma:chain4}, we get that there exists an integer $j \in [k]$ such that $\cup_{r=j}^{k} Y_r \subseteq \hat{S}$ and Lemma \ref{lemma:chain5} ensures that $j=k$. Hence, we have that $Y_k \subseteq \hat{S}$ and $(\cup_{r=1}^{k-2} Y_r) \cap \hat{S} = \emptyset$, if $k \geq 3$. Thus, we have that $X \cup Y_k \subseteq \hat{S}$ and so, $\gamma_{gr}(G) \geq |X|+|Y_k|$. Now, if there is no vertex of $Y$ before the vertices of $X_k$ in $S$, then $\gamma_{gr}(G) \leq |X|$. So, all vertices of $Y_k$ appear before vertices of $X_k$.

\smallskip
Next, we have that $Y_{k-1} \cap \widehat{S} = \emptyset$ using Lemma \ref{lemma:chain6}.
Therefore, $\hat{S} = X \cup Y_k$ which implies that $\gamma_{gr} (G) = |X| + |Y_k|$.
\end{proof}

Analogous to Lemma~\ref{lemma:chain7}, we give a symmetric lemma for the set $Y$ of $G$, whose proof follows similar lines, and is omitted.

\medskip
\begin{lemma}\label{lemma:chain8}
Let $S=(v_1, v_2, \ldots, v_p)$ be a GD-sequence of $G$, which satisfies $Y \subseteq \hat{S}$. Then one of the following statements is true.\\
$1.$ $\gamma_{gr} (G) = |Y|$\\
$2.$ $\gamma_{gr} (G) = |X|$\\
$3.$ $\gamma_{gr} (G) = |Y| + |X_1|$
\end{lemma}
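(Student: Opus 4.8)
The plan is to mirror the proof of Lemma~\ref{lemma:chain7} by exploiting the $X$–$Y$ symmetry of chain graphs. First I would dispose of the degenerate case: if $S$ is not a type (c) GD-sequence, then since $Y\subseteq\hat S$ we must have $X\cap\hat S=\emptyset$, whence $\gamma_{gr}(G)=|Y|$, giving statement~$2$ (here playing the role of statement~$1$ in Lemma~\ref{lemma:chain7}). So I would assume from now on that $S$ is of type (c), and write $S=(v_1,v_2,\ldots,v_p)$.

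Next I would split on the first vertex $v_1$, using the symmetric counterpart of Case~1 in Lemma~\ref{lemma:chain7}. If $v_1\in X_k$, then $v_1$ footprints all of $Y$ (since $N(X_k)=\bigcup_{r=1}^k Y_r=Y$ by Observation~\ref{chainpartition}), so every subsequent vertex only footprints vertices of $X\setminus\{v_1\}$, forcing $\gamma_{gr}(G)\le|X|$; combined with the obvious lower bound this yields $\gamma_{gr}(G)=|X|$, i.e.\ statement~$2$. (In fact Lemma~\ref{lemma:chain2}(1) already tells us a type (b) GD-sequence exists in this situation, which is another way to reach the same conclusion.) Then I would handle the main case $v_1\notin X_k$: here $S$ satisfies the hypotheses of part~(2) of Lemma~\ref{lemma:chain2}, so there is an integer $j\in[k]$ with $\bigcup_{r=j}^k Y_r\subseteq\hat S$, and because $Y\subseteq\hat S$ we in fact have $j=1$. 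Dually, Lemma~\ref{lemma:chain3} produces the largest index $i$ with $\bigcup_{r=1}^i X_r\subseteq\hat S$, and Lemma~\ref{lemma:chain5} (applied with the roles of $i$ and $j$) forces $i=1$ (the symmetric analogue of "$j=k$" in Lemma~\ref{lemma:chain7}). Thus $Y\cup X_1\subseteq\hat S$, which gives $\gamma_{gr}(G)\ge|Y|+|X_1|$.

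It then remains to prove the matching upper bound $\gamma_{gr}(G)\le|Y|+|X_1|$, i.e.\ that $\hat S=Y\cup X_1$. I would argue exactly as at the end of Lemma~\ref{lemma:chain7}: if no vertex of $X$ appears in $S$ before the vertices of $Y_1$, then $\gamma_{gr}(G)\le|Y|$ and we are in statement~$1$; otherwise all vertices of $X_1$ (the only $X$-vertices that can precede $Y_1$, since $N(X_r)\supseteq Y_1$ only makes sense to footprint $Y_1$ via $X_1$) appear before $Y_1$, and then Lemma~\ref{lemma:chain6}, in its symmetric form, gives $X_2\cap\hat S=\emptyset$. Together with the consequence of Lemmas~\ref{lemma:chain3} and~\ref{lemma:chain5} that $\bigl(\bigcup_{r=3}^k X_r\bigr)\cap\hat S=\emptyset$ when $k\ge3$, this pins down $\hat S\subseteq Y\cup X_1$, hence $\gamma_{gr}(G)=|Y|+|X_1|$, which is statement~$3$.

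I expect the main obstacle to be purely bookkeeping: making sure the "symmetric forms" of Lemmas~\ref{lemma:chain3}, \ref{lemma:chain5} and~\ref{lemma:chain6} are invoked with the correct index translation ($X_i\leftrightarrow Y_{k+1-i}$, "largest $i$ with $\bigcup_{r\le i}X_r\subseteq\hat S$" $\leftrightarrow$ "smallest $j$ with $\bigcup_{r\ge j}Y_r\subseteq\hat S$"), and verifying that the hypothesis $Y\subseteq\hat S$ really does collapse the free index $j$ to $1$ and, via Lemma~\ref{lemma:chain5}, the free index $i$ to $1$ — the same collapse that turned "$i$ arbitrary, $j=k$" into the three clean cases in Lemma~\ref{lemma:chain7}. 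Since the authors explicitly say the proof "follows similar lines and is omitted," no genuinely new idea is needed; the write-up is just the mirror image of Lemma~\ref{lemma:chain7}'s proof with $X$ and $Y$ interchanged.
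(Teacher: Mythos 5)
Your proposal is exactly the symmetric mirror of the proof of Lemma~\ref{lemma:chain7} that the paper intends (it omits the proof of Lemma~\ref{lemma:chain8}, stating only that it ``follows similar lines''), and the index translations and lemma invocations you describe match that template. The only slip is a label in your first paragraph: the degenerate non-type-(c) case yields $\gamma_{gr}(G)=|Y|$, which is statement~1 of this lemma, not statement~2.
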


\medskip
\begin{lemma}\label{lemma:chain9}
If $G = (X, Y,E)$ is a chain graph such that every GD-sequence of $G$ is of type (c), then for any GD-sequence $S$ of $G$, the following statements are true:
\begin{enumerate}[(1)]
\item  $\cup_{r=1}^{i} X_r \subseteq \hat{S}$ and $\cup_{r=j}^{k} Y_r \subseteq \hat{S}$ for some $i,j \in [k]$.
\item Integers $i$ and $j$ satisfy exactly one of the following:
  \begin{enumerate}
  \item $i<k, j>1$.
  \item $i=1, j=1$.
  \item $i=k, j=k$.
  \end{enumerate}

\item If $i<k, j>1$, then
$\gamma_{gr} (G) = \begin{cases}
  \sum_{r=1}^{i} |X_r| + \sum_{r=j}^{k} |Y_r|  &: j=i \\
  \sum_{r=1}^{i} |X_r| + \sum_{r=j}^{k} |Y_r|+1  &: j=i+1
\end{cases}$

\item If $i=1, j=1$, then $\gamma_{gr} (G) = |Y| + |X_1|$
\item If $i=k, j=k$, then $\gamma_{gr} (G) = |X| + |Y_k|$
\end{enumerate}
\end{lemma}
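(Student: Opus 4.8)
The plan is to merge Lemmas~\ref{lemma:chain1}--\ref{lemma:chain8} into a single bookkeeping argument: first normalise the GD-sequence under consideration so that every structural conclusion of those lemmas applies to it, and then determine $\gamma_{gr}(G)$ by counting which of the parts $X_r,Y_r$ meet $\hat S$ and with what multiplicity. Concretely, I would fix a GD-sequence $S$; by hypothesis it is of type~(c), so $G$ possesses no GD-sequence of type~(a) or~(b). By part~(1) of Lemma~\ref{lemma:chain1} the first vertex of a GD-sequence cannot lie in $Y_1$, and by part~(1) of Lemma~\ref{lemma:chain2} it cannot lie in $X_k$; hence part~(2) of each of these lemmas, followed by Lemmas~\ref{lemma:chain3}--\ref{lemma:chain6}, may be applied in turn to replace $S$ by a GD-sequence of the same length $\gamma_{gr}(G)$ --- still denoted $S$ --- satisfying every conclusion of Lemmas~\ref{lemma:chain1}--\ref{lemma:chain6}. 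In particular $\cup_{r=1}^{i}X_r\subseteq\hat S$ and $\cup_{r=j}^{k}Y_r\subseteq\hat S$ for suitable indices; taking $i$ largest and $j$ smallest with this property proves part~(1), and since no vertex repeats in a closed neighbourhood sequence, $\gamma_{gr}(G)=|\hat S|$, the identity used throughout the counting.

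For the trichotomy in part~(2) (and with it parts~(4) and~(5)), I would note that $i=k$ is equivalent to $X\subseteq\hat S$ and $j=1$ is equivalent to $Y\subseteq\hat S$, and that these cannot hold simultaneously: $X\cup Y\subseteq\hat S$ would force $\gamma_{gr}(G)=|X|+|Y|$, which exceeds each of $|X|$, $|Y|$ and $|X|+|Y_k|$ when $k\ge2$, contradicting Lemma~\ref{lemma:chain7}. If $i=k$, Lemma~\ref{lemma:chain7} leaves the options $|X|$, $|Y|$, $|X|+|Y_k|$ for $\gamma_{gr}(G)$; the value $|X|$ would force $\hat S=X$, a type~(b) GD-sequence, while $|Y|$ would make the dominating sequence $(Y_k)\oplus(Y_{k-1})\oplus\cdots\oplus(Y_1)$ of length $|Y|$ a type~(a) GD-sequence, both contradicting the hypothesis. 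Hence $\gamma_{gr}(G)=|X|+|Y_k|$, which forces $\hat S\cap Y=Y_k$, i.e.\ $j=k$: this is case~(c) and establishes part~(5). The case $j=1$ is symmetric, yielding case~(b) and $\gamma_{gr}(G)=|Y|+|X_1|$ through Lemma~\ref{lemma:chain8}, which is part~(4). If neither $i=k$ nor $j=1$, then $i<k$ and $j>1$, which is case~(a); the three cases are pairwise exclusive because $k\ge2$.

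It remains to treat case~(a) and prove part~(3). Here Lemma~\ref{lemma:chain5}(1) gives $j\in\{i,i+1\}$. If $j=i$, then Lemma~\ref{lemma:chain6}(1) gives $X_{i+1}\cap\hat S=\emptyset=Y_{i-1}\cap\hat S$, Lemma~\ref{lemma:chain3} gives $X_r\cap\hat S=\emptyset$ for every $r\ge i+2$, and Lemma~\ref{lemma:chain4} gives $Y_r\cap\hat S=\emptyset$ for every $r\le i-2$; hence $\hat S=(\cup_{r=1}^{i}X_r)\cup(\cup_{r=j}^{k}Y_r)$, a disjoint union with one part inside $X$ and the other inside $Y$, and taking cardinalities gives the first line of the displayed formula. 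If $j=i+1$, the same three lemmas together with Lemma~\ref{lemma:chain6}(2) show that $\hat S$ equals that same set plus exactly one further vertex --- taken from $X_{i+1}$ or from $Y_i$ according to which alternative of Lemma~\ref{lemma:chain6}(2) holds --- giving the second line. The boundary instances $i=k-1$ and $j=2$ are handled by part~(2) of Lemmas~\ref{lemma:chain3} and~\ref{lemma:chain4}, respectively.

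I expect the only genuinely delicate point to be closing the net in case~(a): verifying that $\hat S$ contains nothing outside $(\cup_{r=1}^{i}X_r)\cup(\cup_{r=j}^{k}Y_r)$ apart from the single transition vertex, which requires invoking Lemmas~\ref{lemma:chain3}--\ref{lemma:chain6} in exactly the right combination and disposing of the boundary cases explicitly. Once $\hat S$ is pinned down, the remaining cardinality computations and the contradictions used in the trichotomy step are routine.
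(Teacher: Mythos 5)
Your proposal is correct and follows the paper's overall strategy: both proofs are pure assembly of Lemmas~\ref{lemma:chain1}--\ref{lemma:chain8}, first normalising $S$ so that the conclusions of Lemmas~\ref{lemma:chain1}--\ref{lemma:chain6} apply, then reading off $\gamma_{gr}(G)$ by identifying $\hat{S}$ with $(\cup_{r=1}^{i}X_r)\cup(\cup_{r=j}^{k}Y_r)$ plus at most one transition vertex. The one place where you genuinely diverge is the trichotomy in part~(2): the paper disposes of the bad configuration $i=k$, $j=1$ in a single line by invoking the constraint $j\in\{i,i+1\}$ from Lemma~\ref{lemma:chain5}, which forces $k=1$ and contradicts $k\ge 2$; you instead route the cases $i=k$ and $j=1$ through Lemmas~\ref{lemma:chain7} and~\ref{lemma:chain8}, eliminating the values $|X|$ and $|Y|$ by exhibiting type~(a)/(b) GD-sequences forbidden by the hypothesis, and then recovering $j=k$ (resp.\ $i=1$) from the cardinality $|X|+|Y_k|$ (resp.\ $|Y|+|X_1|$) together with part~(1). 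Your route is slightly longer but has two advantages: it establishes parts~(4) and~(5) simultaneously with part~(2) rather than as a separate appeal to Lemmas~\ref{lemma:chain7} and~\ref{lemma:chain8}, and it avoids applying Lemma~\ref{lemma:chain5} at the boundary values $i=k$, $j=1$, which lie outside the range $i\in[k-1]$, $j\in\{2,\ldots,k\}$ actually treated in that lemma's proof. The paper's route buys brevity. Both arguments share the paper's pre-existing looseness that the supporting lemmas only assert the \emph{existence} of a suitable GD-sequence while the statement is phrased for an arbitrary one; your explicit ``replace $S$ by a normalised sequence of the same length'' step handles this at the same level of rigour as the original.
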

\begin{proof}
Lemmas~\ref{lemma:chain1} and~\ref{lemma:chain2} ensure property (1).
To prove property (2), we need to show that $i=k, j=1$ cannot be true. So, assume that this is true. Then, Lemma~\ref{lemma:chain5} yields $k=1$, a contradition proving that property (2) holds. Property (3) follows from Lemmas~\ref{lemma:chain3}, \ref{lemma:chain4}, \ref{lemma:chain5} and~\ref{lemma:chain6}. Properties (4) and (5) can be proved using Lemmas~\ref{lemma:chain7} and~\ref{lemma:chain8}.
\end{proof}

We are ready to present an algorithm for computing a GD-sequence of a chain graph based on the above lemmas; see Algorithm~\ref{alg1}.

\begin{algorithm}[h!]
  \label{alg1}
 \caption{\label{algo:grundychain} GD-sequence of a chain graph}
   \textbf{Input:}{ A chain graph $G=(X,Y,E)$ without isolated vertices along with a chain ordering $(x_1, \ldots ,x_{n_1},y_1, \ldots ,y_{n_2})$ of $V(G)$.}\\
  \textbf{Output:}{ A GD-sequence $S$ of $G$.}\\
   \setcounter{AlgoLine}{0}

  \small
   \nl Find the parts $X_1, X_2, \ldots, X_k$ and $Y_1, Y_2, \ldots, Y_k$;

   \nl $i=0$,  $sum[$ $] =0$;
   
   \nl \If{$|Y_{1}|=1$}{
   \nl $sum[0]=n_{2}+|X_{1}|$;}
   \nl \Else{
   \nl $sum[0]=n_{2}$;}

   \nl \For{$i=1:k-1$}{
   \nl $sum[i]=\sum_{j=1}^{i} |X_{j}|+n_{2}-\sum_{j=1}^{i} |Y_{j}|+1$;}

  \nl \If{$|X_{k}|=1$}{
   \nl $sum[k]=n_{1}+|Y_{k}|$;}
   \nl \Else{
   \nl $sum[k]=n_{1}$;}
   
   \nl Find an index $i^*\in \{0,1,2,\ldots,k\}$ for which $sum[i]$ is maximum;

   \nl \If{$i^* == 0$ and $|Y_1|>1$}{

   \nl  $S \leftarrow (Y_{k}) \oplus (Y_{k-1}) \oplus \cdots \oplus (Y_{1})$; }

   \nl \ElseIf{$i^* == 0$ and $|Y_1|=1$}{

   \nl \If{$k \geq 3$}{

   \nl  $S \leftarrow (X_1) \oplus (Y_k) \oplus \cdots \oplus (Y_{3}) \oplus (Y_1) \oplus (Y_2)$; }

   \nl \Else{

   \nl   $S \leftarrow (X_1) \oplus (Y_1) \oplus (Y_2)$; }}

   \nl \ElseIf{$i^* == k$ and $|X_k|>1$}{

   \nl  $S \leftarrow (X_1) \oplus (X_2) \oplus \cdots \oplus (X_k)$;}

 \nl \ElseIf{$i^* == k$ and $|X_k|=1$}{

 \nl \If{$k \geq 3$}{

   \nl  $S \leftarrow (X_1) \oplus (X_2) \oplus \cdots \oplus (X_{k-2}) \oplus (Y_k) \oplus (X_{k}) \oplus (X_{k-1})$;}

   \nl \Else{

   \nl  $S \leftarrow (Y_2) \oplus (X_2) \oplus (X_1)$; }}
   \nl \Else{

   \nl choose a vertex $x \in X_{i^*+1}$

   \nl \If{$i^* >1$}{

   \nl $S \leftarrow (X_1) \oplus (X_2) \oplus \cdots \oplus (X_{i^*-1}) \oplus (Y_{k}) \oplus (Y_{k-1}) \oplus \cdots \oplus (Y_{i^*+1}) \oplus x \oplus (X_{i^*})$;}

   \nl \Else{

   \nl  $S \leftarrow (Y_{k}) \oplus (Y_{k-1}) \oplus \cdots \oplus (Y_2) \oplus x \oplus (X_{1})$; }}

   \nl return $S$.
   \end{algorithm}

\smallskip
 By following the above discussion, note that $\gamma_{gr} (G) \in {\cal A}$, where $${\cal A}=\{n_1, n_2, n_1+|Y_k|, n_2+|X_1|, \sum_{r=1}^{i} |X_r| + \sum_{l=i}^{k} |Y_l|, \sum_{r=1}^{i} |X_r| + \sum_{l=i+1}^{k} |Y_l| +1\},$$ for some $i \in [k-1]$. Thus, the sequence returned by Algorithm \ref{algo:grundychain} is a GD-sequence of $G$. It is easy to see that Algorithm \ref{algo:grundychain} computes $S$ in linear time, {which is the time needed to compute the parts $X_1,\ldots, X_k,Y_1,\ldots,Y_k$}. The following theorem readily follows. 

\medskip
\begin{theorem}\label{thm:chaincorrectness}
Algorithm \ref{algo:grundychain} outputs a GD-sequence $S$ of $G$ in linear time.
\end{theorem}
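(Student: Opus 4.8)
The plan is to separate correctness of Algorithm~\ref{algo:grundychain} --- that the returned sequence $S$ is a dominating sequence of $G$ of length $\gamma_{gr}(G)$ --- from the linear-time bound, and to prove correctness by showing $\gamma_{gr}(G)=\max\{sum[0],\dots,sum[k]\}$. The inequality $\gamma_{gr}(G)\ge\max_i sum[i]$ follows from the constructions in the algorithm itself: for each admissible index $i$ the corresponding branch exhibits a sequence of $G$ of length exactly $sum[i]$, so it is enough to check these are legal dominating sequences (done below). The reverse inequality $\gamma_{gr}(G)\le\max_i sum[i]$ is what Lemmas~\ref{lemma:chain1}--\ref{lemma:chain9} deliver: they classify every GD-sequence of $G$ as type~(a), (b), or (c) and pin its length to one of $n_1$, $n_1+|Y_k|$, $n_2$, $n_2+|X_1|$, $\sum_{r=1}^{i}|X_r|+\sum_{r=i}^{k}|Y_r|$, $\sum_{r=1}^{i}|X_r|+\sum_{r=i+1}^{k}|Y_r|+1$ ($i\in[k-1]$), each of which equals some $sum[j]$ --- the tests $|Y_1|=1$ and $|X_k|=1$ and Lemma~\ref{lemma:chain5}(2) determine precisely when the ``$+|X_1|$'', ``$+|Y_k|$'', and ``$+1$'' corrections are in force. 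Hence $\gamma_{gr}(G)=sum[i^*]$, and any dominating sequence of $G$ of length $sum[i^*]$ is a GD-sequence.

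It then remains to confirm that the sequence returned in the branch selected by $i^*$ is a legal closed neighborhood sequence that dominates $G$ and has length $sum[i^*]$. This is a short case analysis on $i^*$ and the branch taken, using only $N(X_i)=\bigcup_{r=1}^{i}Y_r$ and $N(Y_j)=\bigcup_{r=j}^{k}X_r$ from Observation~\ref{chainpartition}. As a representative case, in the main branch with $i^*>1$: after reading $(X_1)\oplus\cdots\oplus(X_{i^*-1})\oplus(Y_k)\oplus\cdots\oplus(Y_{i^*+1})$, precisely the twin classes $X_{i^*}$ and $Y_{i^*}$ remain unfootprinted; the single vertex $x\in X_{i^*+1}$ then footprints $Y_{i^*}$ (while $x$ itself is already footprinted by the block $(Y_{i^*+1})$), the trailing block $(X_{i^*})$ footprints its own members, $V(G)$ becomes fully dominated, and the length is $\sum_{r=1}^{i^*}|X_r|+\sum_{r=i^*+1}^{k}|Y_r|+1=sum[i^*]$; the remaining branches are handled analogously. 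I expect this case analysis to be the main obstacle: in every branch one must check that the single inserted vertex indeed footprints a fresh vertex and that a complete twin class $(X_r)$ or $(Y_r)$ is placed only where it still footprints itself --- which is exactly why the general branch uses a single vertex of $X_{i^*+1}$ and why the algorithm branches on $|X_k|=1$ versus $|X_k|>1$ and on $|Y_1|=1$ versus $|Y_1|>1$.

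Finally, the running time is linear. Since the chain ordering is part of the input, the partition $X_1,\dots,X_k,Y_1,\dots,Y_k$ is obtained in one linear scan (a twin class being a maximal run of equal-degree vertices along the chain ordering). With prefix sums of the sizes $|X_r|$ and $|Y_r|$ the array $sum[0..k]$ is filled and the maximizing index $i^*$ is found in $O(k)$ time, and in each branch the sequence $S$ is output by concatenating already available blocks in $O(n)$ time. Therefore Algorithm~\ref{algo:grundychain} runs in linear time, which completes the proof.
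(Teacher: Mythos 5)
Your proposal is correct and follows essentially the same route as the paper: the upper bound $\gamma_{gr}(G)\le\max_i sum[i]$ is extracted from Lemmas~\ref{lemma:chain1}--\ref{lemma:chain9} (the paper phrases this as $\gamma_{gr}(G)\in\mathcal{A}$), the lower bound comes from checking that each branch of Algorithm~\ref{algo:grundychain} outputs a legal dominating sequence of length $sum[i]$, and linearity reduces to computing the twin partition. The paper leaves the branch-by-branch verification and the matching of the values in $\mathcal{A}$ to the entries $sum[0],\dots,sum[k]$ implicit, so your write-up is just a more explicit rendering of the same argument.
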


\medskip
There is a connection between Grundy domination number of a graph and its independence number. 
Let $A$ be an independent set of size $\alpha(G)$. By considering all vertices of $A$ in any order, we get a closed neighborhood sequence of $G$, which yields the well known bound $\gamma_{gr}(G) \geq \alpha(G)$. In addition, for a chain graph $G$, we prove that Grundy domination number is either $\alpha(G)$ or $\alpha(G)+1$.

\medskip
\begin{theorem}
If $G$ is a chain graph, then $\gamma_{gr}(G) \in \{\alpha(G), \alpha(G)+1\}$.
\end{theorem}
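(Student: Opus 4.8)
The plan is to compute $\alpha(G)$ explicitly for a chain graph and compare it with the value of $\gamma_{gr}(G)$ obtained from Lemma~\ref{lemma:chain9} (together with Lemmas~\ref{lemma:chain7} and~\ref{lemma:chain8} for the degenerate cases). First I would recall that in a chain graph $G=(X,Y,E)$ with partition into twin-classes $X_1,\dots,X_k$ and $Y_1,\dots,Y_k$, a maximum independent set is obtained by fixing a threshold $t\in\{0,1,\dots,k\}$ and taking $\bigcup_{r\le t}X_r \cup \bigcup_{r>t}Y_r$; since $N(X_i)=\bigcup_{r\le i}Y_r$ and $N(Y_j)=\bigcup_{r\ge j}X_r$ (Observation~\ref{chainpartition}), such a set is independent precisely because $X_r$ and $Y_s$ are non-adjacent exactly when $s>r$. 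Hence
\[
\alpha(G)=\max_{0\le t\le k}\Big(\sum_{r=1}^{t}|X_r|+\sum_{r=t+1}^{k}|Y_r|\Big),
\]
with the two extreme choices $t=0$ and $t=k$ giving $|Y|$ and $|X|$ respectively, both of which are candidate values of $\gamma_{gr}(G)$.

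**Key steps.**

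Second, I would go through the possible values of $\gamma_{gr}(G)$ listed in the set ${\cal A}$ right after Algorithm~\ref{algo:grundychain} and check, for each one, that it lies in $\{\alpha(G),\alpha(G)+1\}$. The values $n_1=|X|$ and $n_2=|Y|$ are exactly the $t=k$ and $t=0$ terms of the maximum defining $\alpha(G)$, so they are at most $\alpha(G)$; but since $\gamma_{gr}(G)\ge\alpha(G)$ always, whenever one of these is the Grundy domination number we in fact have $\gamma_{gr}(G)=\alpha(G)$. For a ``middle'' value $\sum_{r=1}^{i}|X_r|+\sum_{r=i}^{k}|Y_r|$ (the $j=i$ case of Lemma~\ref{lemma:chain9}(3)), I note this equals the threshold-$t$ sum with $t=i-1$ plus $|X_i|$, or equivalently the threshold-$t$ sum with $t=i$ plus $|Y_i|$; by Lemma~\ref{lemma:chain5}(2) either $|X_i|=1$ or $|Y_i|=1$, so in either case this value equals (a term of the $\alpha$-maximum) $+1$, hence is at most $\alpha(G)+1$, and combined with $\gamma_{gr}(G)\ge\alpha(G)$ it lands in $\{\alpha(G),\alpha(G)+1\}$. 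For the $j=i+1$ case the value is $\sum_{r=1}^{i}|X_r|+\sum_{r=i+1}^{k}|Y_r|+1$, which is exactly the threshold-$t$ sum with $t=i$ plus $1$, again at most $\alpha(G)+1$. Finally the values $n_1+|Y_k|$ and $n_2+|X_1|$ from Lemmas~\ref{lemma:chain7} and~\ref{lemma:chain8} equal the $t=k$ sum plus $|Y_k|$ and the $t=0$ sum plus $|X_1|$ respectively; here I would invoke the hypotheses guaranteeing these cases occur, namely $|X_k|=1$ (for $n_1+|Y_k|$, forcing $|Y_k|\le$ something that makes it $\alpha+1$) — more precisely, when $\gamma_{gr}(G)=|X|+|Y_k|$ Lemma~\ref{lemma:chain7} arises only when the relevant twin-class has size one, so $|Y_k|=1$ and the value is $|X|+1\le\alpha(G)+1$; symmetrically $n_2+|X_1|=|Y|+1$ when it occurs.

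**Main obstacle.**

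The subtle point — and the part I expect to require the most care — is the bookkeeping around the ``plus one'' terms: one must be sure that in every case where $\gamma_{gr}(G)$ exceeds the naive threshold sum, the excess is exactly $1$ and not more, and that the relevant twin-class really has size $1$ whenever the extra vertex is counted. This is precisely what Lemmas~\ref{lemma:chain5} and~\ref{lemma:chain6} were designed to secure (e.g., $|Y_{i+1}|=1$ in the $j=i+2$ analysis, $|X_k\cap\hat S|=1$ in Lemma~\ref{lemma:chain3}(2)), so the proof is essentially a matter of quoting these structural constraints correctly and matching each clause of Lemma~\ref{lemma:chain9} to the appropriate threshold $t$ in the formula for $\alpha(G)$; no new combinatorial idea is needed beyond the observation that the Grundy value always sits between $\alpha(G)$ and one of its own defining sums plus one.
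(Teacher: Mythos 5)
Your proposal is correct and takes essentially the same approach as the paper: express $\alpha(G)$ as the maximum of the threshold sums $\sum_{r\le t}|X_r|+\sum_{r>t}|Y_r|$, verify that each candidate value of $\gamma_{gr}(G)$ from the set ${\cal A}$ is either such a sum or such a sum plus one, and conclude via $\gamma_{gr}(G)\ge\alpha(G)$. The only slip is in the $n_1+|Y_k|$ case, where the size-one twin class guaranteed by Lemma~\ref{lemma:chain7} and the algorithm is $X_k$ rather than $Y_k$ (so that value is the $t=k-1$ threshold sum plus one, not $|X|+1$); this does not affect the bound $\gamma_{gr}(G)\le\alpha(G)+1$, and the symmetric case $n_2+|X_1|$ with $|Y_1|=1$ is handled likewise.
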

\begin{proof}
Let $G$ be a chain graph. By using the notation established in this section, we claim that $\alpha(G) \in \{n_1, n_2, \sum_{j=1}^{i}|X_j|+\sum_{j=i+1}^{k}|Y_j| \text{ for some } i \in [k-1]\}$. To see this, let $A$ be a maximum independent set of $G$. Three cases are possible. If $A \subseteq X$, then $A = X$ implying that $\alpha(G) = n_1$; if $A \subseteq Y$, then $A = Y$ implying that $\alpha(G) = n_2$. Now, if $A \cap X \neq \emptyset$ and $A \cap Y \neq \emptyset$, then one can easily infer that $A = (\cup_{j=1}^{i}X_j) \cup (\cup_{j=i+1}^{k}Y_j)$ for some $i \in [k-1]$ implying that $\alpha(G) = \sum_{j=1}^{i}|X_j|+\sum_{j=i+1}^{k}|Y_j|$. Letting $i_0 \in [k-1]$ be an index such that $\sum_{j=1}^{i_0}|X_j|+\sum_{j=i_0+1}^{k}|Y_j| \geq \sum_{j=1}^{i}|X_j|+\sum_{j=i+1}^{k}|Y_j| \text{ for each } i \in [k-1]$, we may write $\alpha(G) \in \{n_1, n_2, \sum_{j=1}^{i_0}|X_j|+\sum_{j=i_0+1}^{k}|Y_j|\}$. 

\smallskip
Algorithm \ref{algo:grundychain} computes a GD-sequence of $G$ and it turns out that $\gamma_{gr} (G) \in \{n_1, n_2, n_1+|Y_k|, n_2+|X_1|, \sum_{j=1}^{i} |X_j| + \sum_{j=i}^{k} |Y_j|, \sum_{j=1}^{i} |X_j| + \sum_{j=i+1}^{k} |Y_j| +1\}$ for some $i \in [k-1]$. Note that $\gamma_{gr}(G)=n_1 +|Y_k|$ when $|X_k|=1$ implying that $\gamma_{gr}(G) = \sum_{j=1}^{k-1} |X_j| + |Y_k| + 1$. Similarly, $\gamma_{gr}(G)=n_2 +|X_1|$ when $|Y_1|=1$ implying that $\gamma_{gr}(G) =  |X_1| + \sum_{j=2}^{k}|Y_k| + 1$. Now, if $\gamma_{gr}(G)=\sum_{j=1}^{i} |X_j| + \sum_{j=i}^{k} |Y_j|$ for some $i \in [k-1]$ then Lemma \ref{lemma:chain5} ensures that $\gamma_{gr}(G)$ is either $ \sum_{j=1}^{i} |X_j| + \sum_{j=i+1}^{k} |Y_j| + 1$ or $ \sum_{j=1}^{i-1} |X_j| + \sum_{j=i}^{k} |Y_j| + 1$. Hence, $\gamma_{gr}(G) \in \{n_1, n_2, \sum_{j=1}^{i}|X_j|+\sum_{j=i+1}^{k}|Y_j| + 1 \text{ for some } i \in [k-1]\}$. Since Algorithm \ref{algo:grundychain} computes the GD-sequence by finding the maximum of the set $\{n_1, n_2\} \cup \{\sum_{j=1}^{i}|X_j|+\sum_{j=i+1}^{k}|Y_j| + 1: i \in [k-1]\}$, we have, $\gamma_{gr}(G) \in \{n_1, n_2, \sum_{j=1}^{i_0}|X_j|+\sum_{j=i_0+1}^{k}|Y_j|+1\}$. Now, suppose $t= \sum_{j=1}^{i_0}|X_j|+\sum_{j=i_0+1}^{k}|Y_j|$, then we can write that $\alpha(G) \in \{n_1, n_2, t\}$ and $\gamma_{gr}(G) \in \{n_1, n_2, t+1\}$. Now, we consider three cases.

\smallskip
\noindent
\textbf{Case 1}:  $\gamma_{gr}(G) = n_1$:\\
In this case, $n_1 \geq n_2$ and $n_1 \geq t+1 > t$. This implies that $\alpha(G) = n_1$.

\smallskip
\noindent
\textbf{Case 2}:  $\gamma_{gr}(G) = n_2$:\\
In this case, $n_2 \geq n_1$ and $n_2 \geq t+1 > t$. This implies that $\alpha(G) = n_2$.

\smallskip
\noindent
\textbf{Case 3}:  $\gamma_{gr}(G) = t+1$:\\
In this case, if $\alpha(G) = n_1$ then $n_1 \geq t = \gamma_{gr}(G)-1$. So, $\gamma_{gr}(G) \leq \alpha(G)+1$. Similarly, if $\alpha(G) = n_2$ then $n_2 \geq t = \gamma_{gr}(G)-1$. So, $\gamma_{gr}(G) \leq \alpha(G)+1$. Otherwise, $\alpha(G) = t$ implying that 
$\gamma_{gr}(G) = \alpha(G)+1$. 

Therefore, $\gamma_{gr}(G) \in \{\alpha(G), \alpha(G)+1\}$.
\end{proof}

\section{Conclusion}
\label{sec:con}
In this paper, we studied the GDD problem for bipartite graphs and co-bipartite graphs. We proved that the problem is NP-complete for bipartite graphs and efficiently solvable for chain graphs, which form a subclass of bipartite graphs. We also proved NP-completeness of the problem in co-bipartite graphs. To obtain a complete dichotomy, it would be interesting to find the status of the problem in some graph classes that lie between chain graphs and bipartite graphs.

\smallskip
Note that the GDD problem in the class of co-chain graphs (that is, the complements of chain graphs) is easily solvable. Indeed, in a co-chain graph $G=(X,Y,E)$, one can find similar partitions of $X$ and $Y$ into $k$ sets $X_1,\ldots, X_k$, and $Y_1,\ldots,Y_k$, respectively, that arises from the closed twin relation in $G$. Then, one can also immediately infer that $\gamma_{gr}(G)=k$. It would be interesting to see if there are some other known classes of graphs $\cal G$ whose complement class $\overline{{\cal G}}=\{\overline{G}:\, G\in \cal G\}$ has similar status of the computational complexity of the GDD problem as $\cal G$. Two nice instances (with $\cal G$ given by bipartite graphs and chain graphs) are presented in this paper.

\section*{Declarations}
\noindent
\textbf{Conflict of interest}:
The authors have no conflicts of interest to declare that are relevant to the content of this article.

\section*{Acknowledgments}

The first author was supported by the Slovenian Research and Innovation agency (grants P1-0297, J1-2452, J1-3002, and J1-4008).

\end{document}